\newtheorem{theorem}{Theorem}
\newtheorem{lemma}[theorem]{Lemma}
\newtheorem{koro}[theorem]{Corollary}
\newcommand{\bd}{{\mathrm{bd}}\,}
\newcommand{\relint}{{\mathrm{relint}}\,}
\newcommand{\cl}{{\mathrm{cl}}\,}
\newcommand{\Nor}{{\mathrm{Nor}}\,}
\newcommand{\R}{{\mathbb R}}
\newcommand{\E}{{\mathrm e}}
\newcommand{\Q}{{\mathbb Q}}
\newcommand{\cK}{{\mathcal  K}}
\newcommand{\cP}{{\mathcal  P}}
\newcommand{\gomega}{\tilde\omega}
\def\section{%
\setcounter{equation}{0} \setcounter{theorem}{0} \@startsection
{section}{1}{\z@}{-4.0ex plus -1ex minus
    -.2ex}{2.3ex plus .2ex}{\bf}}
\theoremstyle{definition}
\newtheorem{example}{Example}
\begin{document}

\title{Integral geometry of translation invariant functionals, II: The case of general convex bodies}

\author{
Wolfgang Weil\\
 \\ {\it Department of Mathematics, 
 Karlsruhe Institute of Technology}\\
 {\it 76128 Karlsruhe, Germany}}
\date{\today}

\maketitle
\begin{abstract}
\noindent In continuation of Part I, we study translative integral formulas for certain translation invariant functionals, which are defined on general convex bodies. Again, we consider local extensions and  use these to show that the translative formulas extend to arbitrary continuous and translation invariant valuations. Then, we discuss applications to Poisson particle processes and Boolean models which contain, as a special case, some new results for flag measures.
\end{abstract}

\section{Introduction}

In the introduction to the first part of this paper \cite{W09}, we have motivated the study of translation invariant functionals $\varphi$ on ${\cal K}$, the class of convex bodies in $\R^d$, which have a local extension as a kernel. For functionals on ${\cal P}$, the set of polytopes, these local functionals provide a richer class than the classical additive functionals ({\it valuations}).  A major reason for studying local functionals was the fact that they obey translative integral formulas similar to the integral geometric results for curvature measures and intrinsic volumes. We have seen that there are some interesting new functionals contained in this local concept, for example the total $k$-volume of the $k$-skeleton of a polytope, $k\in\{ 0,...,d-1\}$. Although the local functionals on ${\cal P}$ need not be additive, they allow an extension to unions of polytopes which are in mutual general position. This was used in \cite{W09} to 
obtain formulas for local functionals of Boolean models which are in analogy to the well-known results for intrinsic volumes (as they are presented in \cite[Section 9.1]{SW}).

In this second part, we discuss local functionals $\varphi$ on ${\cal K}$. It is natural then to add a continuity condition (in order to use polytopal approximation, for example). As we shall see, this changes the situation drastically. Namely, a (continuous) local functional on ${\cal K}$ is automatically additive, hence a valuation. In the opposite direction, for any translation invariant continuous valuation $\varphi$ on ${\cal K}$, the restriction to ${\cal P}$ is a local functional. However, this does not guarantee that $\varphi$ admits a local extension on ${\cal K}$. In fact, the question whether every translation invariant continuous valuation on ${\cal K}$ is a local functional remains open.

We shall show, that there is again a translative integral formula for local functionals on ${\cal K}$ which can be iterated. This even holds for valuations (without the assumption on a local extension). Thus, for a translation invariant, continuous and $j$-homogeneous valuation $\varphi^{(j)}$ on ${\cal K}$, we obtain a sequence of mixed functionals $\varphi^{(j)}_{m_1,...,m_k}$ which are in analogy to the mixed functionals $V^{(j)}_{m_1,...,m_k}$ arising from the intrinsic volumes $V_j, j=0,...,d-1$. We then study kinematic formulas and we extend some of the results from the first part to mean values of valuations for Boolean models with general convex or polyconvex grains. Finally, we discuss a few special cases and obtain new integral and mean value formulas for flag measures. In an appendix we provide an approximation result (with a proof by Rolf Schneider) which is used in Section 3.

\section{Definitions and basic result}

For completeness, we repeat some of the notations and definitions used in \cite{W09}. For general notions from convex geometry, we refer to \cite{S}.

Let $\cal K$ be the space of  convex bodies in $\R^d$ supplied with the Hausdorff metric, let $\cal P$ be the (dense) subset of convex polytopes and let ${\cal B}$ denote the $\sigma$-algebra of Borel sets in $\R^d$. For a convex body $K$ and $j=0,...,d$, $V_j(K)$ denotes the $j$th intrinsic volume and $\Phi_j(K,\cdot)$ is the $j$th curvature measure. Thus, $V_d(K)=\lambda (K)$ is the volume of $K$ (and $\lambda$ is the Lebesgue measure in $\R^d$). We denote by $\lambda_K$ the restriction of $\lambda$ to $K$. For a polytope $P$, let ${\cal F}_j(P)$ be the collection of $j$-faces of $P$, $j=0,...,d-1$, and let $n(P,F)$, for a face $F\in{\cal F}_j(P)$, be the intersection of the normal cone $N(P,F)$ of $P$ at $F$ with the unit sphere $S^{d-1}$;  this is a member of ${\wp}_{d-j-1}^{d-1}$, the class of $(d-j-1)$-dimensional spherical polytopes.  Later, we will also use the larger class $\tilde{\wp}_{d-j-1}^{d-1}$, which consists of the spherical polytopes of dimension $\le d-j-1$. 
For $F\in {\cal F}_j(P)$, let $\lambda_F$ be the restriction to $F$ of the ($j$-dimensional) Lebesgue measure in the affine hull of $F$. Here, the dimension of $\lambda_F$ will always be clear from the context. 

We call a functional $\varphi : {\cal K}\to \R$ {\it local}, if it has a {\it local extension}  $\Phi :  {\cal K}\times {\cal B} \to \R$, which is a kernel (a measurable function on ${\cal K}$ in the first variable and a finite signed Borel measure on ${\R}^d$ in the second variable) and such that $\Phi$ has the following properties:
\begin{itemize}
\item $\varphi (K) = \Phi(K,{\mathbb R}^d )$ for all $K\in {\cal K}$,
\item $\Phi$ is {\it translation covariant}, that is, satisfies $\Phi (K+x,A+x) = \Phi (K,A)$ for $K\in{\cal K}$, $A\in{\cal B}$, $x\in {\mathbb R}^d$,
\item $\Phi$ is {\it locally determined}, that is, $\Phi (K,A)=\Phi (M,A)$ for $K,M\in{\cal K}$, $A\in{\cal B}$, if there is an open set $U\subset {\mathbb R}^d$ with $K\cap U=M\cap U$ and $A\subset U$,
\item  $K\mapsto \Phi(K,\cdot)$ is weakly continuous on ${\cal K}$ (w.r.t. the Hausdorff metric).
\end{itemize}
 
By definition, local functionals $\varphi$ are translation invariant and continuous. Obviously, for a local functional $\varphi$ on ${\cal K}$, the restriction to ${\cal P}$ is a local functional on ${\cal P}$ in the sense of \cite{W09}, and $\varphi$ and its local extension $\Phi$ are continuous on ${\cal P}$. In the other direction, it is not clear which local  functionals on ${\cal P}$ with suitable continuity properties can be extended to  local  functionals on ${\cal K}$. Since we will show that local functionals on ${\cal K}$ are valuations, additivity is a necessary precondition for a functional on ${\cal P}$ to have a continuous extension to ${\cal K}$. For valuations on ${\cal P}$ the extension problem under various continuity conditions is discussed in the recent paper \cite{HHW}. We recall here that $\varphi : \cal S \to \cal X$ (where $\cal S$ is either $\cal K$ or $\cal P$ or $\tilde{\wp}_{d-j-1}^{d-1}$ and where $\cal X$ is an (additively written) Abelian group) is {\it additive} (in case $\cal X =\R$ we also speak of a {\it valuation}), if
$$
\varphi (K\cup M) + \varphi (K\cap M) = \varphi (K) + \varphi (M)
$$
holds for all $K,M,K\cup M\in\cal S$. The function $\varphi$ is {\it simple}, if $\varphi (K)=0$ whenever $K\in\cal S$ has dimension $\le d-1$ (if $\cal S$ is  $\cal K$ or $\cal P$), respectively $\le d-j-2$ (if ${\cal S} = \tilde{\wp}_{d-j-1}^{d-1}$). We also mention that $\varphi : \cal S\to\cal X$ is called {\it weakly additive}, if it satisfies
$$
\varphi (K) + \varphi (K\cap H) = \varphi (K\cap H^+) +\varphi (K\cap H^-)
$$
for all $K\in\cal S$ and all hyperplanes $H$ (meeting $K$) with corresponding halfspaces $H^+,H^-$. (In case ${\cal S} = \tilde{\wp}_{d-j-1}^{d-1}$ the hyperplanes are great spheres and the halfspaces are halfspheres.)

Let $\varphi$ be a real-valued functional on ${\cal K}$ and $k\in\{ 0,...,d \}$. We call $\varphi$ {\it $k$-homogeneous}, if $\varphi (\alpha K) = \alpha^k \varphi (K)$ holds for all $\alpha\ge 0$ and all $K\in {\cal K}$. Similarly, for a measure-valued functional $\Phi$ (like a local extension of $\varphi$), $k$-homogeneity means that $\Phi (\alpha K, \alpha A) = \alpha^k \Phi (K,A)$ holds for all $\alpha \ge 0$, all $K\in {\cal K}$ and all Borel sets $A\subset \R^d$.

We now formulate a first and basic result of the paper. 

\begin{theorem}\label{th1} Let $\varphi$ be a local functional on $\cal K$ with local extension $\Phi$. Then $\varphi$ has a unique representation
\begin{equation}\label{func1}
\varphi (K) = \sum_{j=0}^{d-1} \varphi^{(j)}(K) + c_dV_d(K)
\end{equation}
with $j$-homogeneous local functionals $\varphi^{(j)}$ on $\cal K$ and a constant $c_d\in\R$. Moreover, there is a unique decomposition
\begin{equation}\label{func2}
\Phi (K,\cdot) = \sum_{j=0}^{d-1} \Phi^{(j)}(K,\cdot) + c_d\lambda_K 
\end{equation}
such that $\Phi^{(j)}$ is a local extension of $\varphi^{(j)}$, for $j=0,...,d-1$.

For a polytope $P$, each $\Phi^{(j)}$ has the form
\begin{equation}\label{func3}
\Phi^{(j)} (P,\cdot) = \sum_{F\in {\cal F}_j(P)} f_j(n(P,F))\lambda_F
\end{equation}
with a (uniquely determined) 
simple additive function $f_j$ on $\tilde{\wp}_{d-j-1}^{d-1}$. We put $f_d=c_d$ and call $f_0,...,f_d$ the {\em associated functions} of $\Phi$. 

As a consequence, $\Phi^{(j)} (K,\cdot)$ and $\Phi (K,\cdot)$ depend additively on $K\in{\cal K}$ and $\varphi^{(j)}$ and $\varphi$ are valuations.

If $\Phi\ge 0$, then $f_j\ge 0, j=0,...,d$, and thus $\Phi^{(j)}\ge 0, \varphi^{(j)}\ge 0$, for $j=0,...,d-1$, and $\varphi\ge 0$.
\end{theorem}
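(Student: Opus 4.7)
The plan is to reduce to the polytope case treated in Part I \cite{W09} and then lift everything to $\cal K$ by weak continuity. Restricting $\varphi$ and $\Phi$ to $\cal P$ yields a local functional on $\cal P$ in the sense of \cite{W09}. The main structural result of Part I then gives the representation \eqref{func3} on polytopes with uniquely determined simple additive functions $f_0,\dots,f_{d-1}$ on $\tilde\wp_{d-j-1}^{d-1}$ and a constant $f_d=c_d$. Grouping terms of \eqref{func3} by the dimension $j$ of the contributing face produces \eqref{func2} on $\cal P$; each $\Phi^{(j)}(P,\cdot)$ is $j$-homogeneous since $\lambda_F$ scales as $\alpha^j$ for a $j$-face $F$ while the spherical normal part $n(P,F)$ is scale-invariant. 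Taking total mass yields \eqref{func1} on polytopes, and uniqueness of the $f_j$ transfers to uniqueness of the decomposition.

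Next, I extend the decomposition from $\cal P$ to $\cal K$ by a McMullen-type polynomial-coefficient argument. For $K\in\cal K$ pick polytopes $P_n\to K$. On $\cal P$ we have the identity
\[
\Phi(\alpha P_n,\alpha A)\;=\;\sum_{j=0}^{d}\alpha^j\,\Phi^{(j)}(P_n,A),
\]
a polynomial in $\alpha$ of degree at most $d$. For a Borel set $A$ that is a continuity set for the limit measure, weak continuity of $\Phi$ gives $\Phi(\alpha P_n,\alpha A)\to\Phi(\alpha K,\alpha A)$ for each fixed $\alpha\ge0$; evaluating at $d+1$ distinct values of $\alpha$ and Vandermonde inversion then forces $\Phi^{(j)}(P_n,A)\to\Phi^{(j)}(K,A)$, a limit independent of the approximating sequence. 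A standard measure-theoretic extension promotes $\Phi^{(j)}(K,\cdot)$ to a finite signed Borel measure, and the four defining properties of a local extension (translation covariance, local determinedness, weak continuity, $j$-homogeneity) transfer to the limit. Identifying the top-degree piece with $c_d\lambda_K$ from the polytopal form produces \eqref{func1} and \eqref{func2} on $\cal K$ with their uniqueness.

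The additive dependence on $K$ is verified first on $\cal P$ using \eqref{func3}: if $P,Q,P\cup Q\in\cal P$, the $j$-faces of $P\cup Q$ and $P\cap Q$ and their normal cones combine from those of $P$ and $Q$ so that the simple additivity of $f_j$ on $\tilde\wp_{d-j-1}^{d-1}$ (established in Part I) yields
\[
\Phi^{(j)}(P\cup Q,\cdot)+\Phi^{(j)}(P\cap Q,\cdot)=\Phi^{(j)}(P,\cdot)+\Phi^{(j)}(Q,\cdot).
\]
To pass from $\cal P$ to $\cal K$ one chooses polytopes $P_n\to K$, $Q_n\to M$ such that $P_n\cup Q_n\to K\cup M$ and $P_n\cap Q_n\to K\cap M$ in the Hausdorff metric; weak continuity of $\Phi$ then transports the polytope identity to the limit. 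I expect this simultaneous polyhedral approximation, under the convex-union constraint $K\cup M\in\cal K$, to be the main technical obstacle of the proof, and it is most naturally addressed by a polyhedral approximation lemma of Rolf Schneider of the kind the authors defer to the appendix.

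Finally, for positivity, assume $\Phi\ge0$. Fix a polytope $P$, a face $F\in\cal F_j(P)$, and a Borel set $A\subset\relint F$. Every other face $G\ne F$ of $P$ satisfies $\lambda_G(A)=0$: higher-dimensional faces annihilate $A$ for dimensional reasons, while faces in different flats (or proper faces of $F$ itself) are disjoint from $\relint F$. Hence \eqref{func3} collapses to $\Phi(P,A)=f_j(n(P,F))\,\lambda_F(A)\ge0$, and taking $\lambda_F(A)>0$ forces $f_j(n(P,F))\ge0$. Since every $\omega\in\tilde\wp_{d-j-1}^{d-1}$ is realized as some $n(P,F)$, we obtain $f_j\ge0$ globally, so \eqref{func3} yields $\Phi^{(j)}\ge0$ on $\cal P$, which weak continuity extends to $\cal K$; positivity of $\varphi^{(j)}$ and $\varphi$ then follows by integration.
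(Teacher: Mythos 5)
There is a genuine gap, and it sits exactly at the heart of the theorem. You write that Part I already gives the representation \eqref{func3} with \emph{simple additive} functions $f_j$ on $\tilde{\wp}_{d-j-1}^{d-1}$, and later you invoke ``the simple additivity of $f_j$ (established in Part I)'' to get additivity of $\Phi^{(j)}$ on $\cal P$. But Part I establishes no such thing: a local functional on $\cal P$ need not be additive at all (the total $j$-volume of the $j$-skeleton, with $f_j\equiv 1$, is the standard counterexample, and $1\neq 1+1$ shows that its $f_j$ is not simply additive). What Part I supplies is only a measurable $f_j$ on ${\wp}_{d-j-1}^{d-1}$. The whole point of Theorem \ref{th1} is that the \emph{extra} hypothesis available in Part II --- weak continuity of the local extension on all of $\cal K$ --- forces additivity, and this is where the paper's proof does its real work: given $p=n(P,F)$, one splits $p$ by a hyperplane $u^\perp$ with $u\in F^\perp$ into $p_1,p_2$, deforms $P$ to $P_t=P+t[0,u]$, and uses the weak convergence $\Phi^{(j)}(P_t,\cdot)\to\Phi^{(j)}(P,\cdot)$ together with the Portmanteau theorem (on a set $A$ with $\Phi^{(j)}(P,\bd A)=0$) to obtain $f_j(p)=f_j(p_1)+f_j(p_2)$, i.e.\ weak additivity; full additivity on $\tilde{\wp}_{d-j-1}^{d-1}$ then follows from McMullen's theorem (\cite[Theorem 6.2.3]{S}) that weakly additive functionals on spherical polytopes are additive. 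Without this step your argument proves none of: simple additivity of $f_j$, additivity of $\Phi^{(j)}$ and $\varphi$ on $\cal P$, and hence none of the additivity statements on $\cal K$, since your passage to $\cal K$ (correctly, via the Schneider approximation lemma in the Appendix) presupposes additivity on polytopes.

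The rest of your outline is essentially the paper's route and is sound: restriction to $\cal P$, the polynomial expansion $\Phi(\alpha P,\alpha A)=\sum_j\alpha^j\Phi^{(j)}(P,A)+\alpha^d c_d\lambda(A\cap P)$ combined with weak continuity to extend the homogeneous pieces to $\cal K$ independently of the approximating sequence, and the transfer of additivity from $\cal P$ to $\cal K$ via simultaneous polytopal approximation (Lemma \ref{Rolf}). Your direct positivity argument (testing $\Phi(P,A)$ on Borel sets $A\subset\relint F$, where higher-dimensional faces contribute zero $\lambda_G$-measure and other faces of dimension $\le j$ miss $\relint F$) is a legitimate, slightly more explicit variant of the paper's appeal to Part I. But as it stands the proposal assumes the one statement that actually has to be proved here.
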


We remark, that the local extension $\Phi$ of a local functional $\varphi$ need not be unique. Even more, for each $j=1,...,d-1$ and each $j$-homogeneous kernel $\Phi^{(j)}\ge 0$, there is a $j$-homogeneous kernel $\tilde\Phi^{(j)}\ge 0$, with $\Phi^{(j)}\not = \tilde\Phi^{(j)}$ and such that $\Phi^{(j)}$ and $\tilde\Phi^{(j)}$  are both local extensions of the same local functional  $\varphi^{(j)} = \Phi^{(j)}(\cdot,\R^d) = \tilde\Phi^{(j)}(\cdot ,\R^d)$. The proof of this fact follows the argument given in \cite{W09} for the polytopal case and will be explained in the next section.

\section{Properties of local functionals}

In this section, we first give the proof of Theorem \ref{th1}.

\begin{proof}
Let $\varphi$ be a local functional on ${\cal K}$ and let $\Phi$ be a local extension of $\varphi$. The restriction of $\varphi$ to ${\cal P}$ is a local functional on ${\cal P}$ and the restriction of $\Phi$ to ${\cal P}$ is a corresponding local extension. Thus, Theorem 2.1 of \cite{W09} holds for these restrictions and this yields \eqref{func1}, \eqref{func2} and \eqref{func3} for polytopes $K$ resp. $P$, where $f_j$ is a measurable function on ${\wp}_{d-j-1}^{d-1}$. We extend it to $\tilde{\wp}_{d-j-1}^{d-1}$ by $f_j(p) =0$, for $p\in \tilde{\wp}_{d-j-1}^{d-1}\setminus {\wp}_{d-j-1}^{d-1}$.

The continuity property of $\varphi$ and $\Phi$ on ${\cal K}$ and the polynomial expansion 
\begin{equation*}
\Phi (\alpha P,\alpha A) = \sum_{j=0}^{d-1} \alpha^j \Phi^{(j)}(P,A) + \alpha^d c_d\lambda (A\cap P)
\end{equation*}
which was proved in \cite{W09} for polytopes $P$, $\alpha\ge 0$ and Borel sets $A\in{\cal B}$,
 show that $\varphi^{(j)}$ and $\Phi^{(j)}$ can be extended continuously to ${\cal K}$, for $j=0,...,d$. 
The equations \eqref{func1} and \eqref{func2} then arise from corresponding results in \cite{W09} by approximation with polytopes and in the same way the non-negativity properties follow.

It remains to show the simple additivity of $f_j$. This then implies that $\Phi^{(j)}$ is additive on ${\cal P}$ and therefore also $\Phi, \varphi^{(j)}$ and $\varphi$. Additivity on ${\cal K}$ then follows by approximation, due to the continuity of the functionals and using Lemma \ref{Rolf} in the Appendix. Let $p\in {\wp}_{d-j-1}^{d-1}$ be a spherical polytope. We can find a polytope $P\in{\cal P}$ and a face $F\in{\cal F}_j(P)$ such that $p=n(P,F)$. Let $A\subset \R^d$ be an open set with $A\cap F\subset \relint F$, $\lambda_F(A)>0,$ and $\cl A\cap G=\emptyset$ for all $G\in{\cal F}_j(P), G\not= F$. It follows that $\Phi^{(j)}(P,\bd A) = f_j(p)\lambda_F(\bd A)=0.$ Let $u\in F^\bot$ be a unit vector such that $u^\bot$ divides $p$ into the two pieces $p_1$ and $p_2$, $p_1,p_2\in {\wp}_{d-j-1}^{d-1}$. We consider the polytope $P_t = P + t[0,u], t>0$. For small enough $t$, this polytope has the two $j$-faces $F$ and $F_t = F+ tu$ and $n(P_t,F) = p_1, n(P_t,F_t)=p_2$, say. Moreover, as $t\to 0$, we have $F_t\to F$ in the Hausdorff metric and so $\lambda_{F_t}\to \lambda_F$ weakly. Since we may assume that, for small $t$, the closure $\cl A$ does not meet any $j$-faces of $P_t$ other than $F$ and $F_t$ and since $\bd A$ is a $0$-set for  $\Phi^{(j)} (P,\cdot)$, the weak convergence $\Phi^{(j)} (P_t,\cdot)\to \Phi^{(j)} (P,\cdot)$ (as $t\to 0$) and the Portmanteau theorem yield
$$\Phi^{(j)} (P_t,A)\to \Phi^{(j)} (P,A)$$
(as well as $\lambda_{F_t}(A)\to \lambda_{F}(A)$).
But 
$$\Phi^{(j)} (P_t,A) = f_j(p_1)\lambda_F(A) + f_j(p_2)\lambda_{F_t}(A)$$
and
$$\Phi^{(j)} (P,A) = f_j(p)\lambda_F(A).$$
Therefore,
$$f_j(p) = f_j(p_1) + f_j(p_2),$$
which shows that $f_j$ is weakly additive (and simple) on ${\wp}_{d-j-1}^{d-1}$. Due to $f_j(p) =0$, for $p\in \tilde{\wp}_{d-j-1}^{d-1}\setminus {\wp}_{d-j-1}^{d-1}$, these properties extend to $\tilde{\wp}_{d-j-1}^{d-1}$.   Since every weakly additive functional on $\tilde{\wp}_{d-j-1}^{d-1}$ is additive (see \cite[Lemma 1.3]{McM93} or \cite[Theorem 6.2.3]{S}), the proof is complete.
\end{proof}

Concerning the non-uniqueness of local extensions, the following example was discussed in \cite{W09} for local functionals on ${\cal P}$. We explain shortly the generalization to functionals on ${\cal K}$. For $K\in{\cal K}$, we use the support measures $\Lambda_j(K,\cdot), j=0,\dots, d-1,$ which are finite Borel measures on the (generalized) normal bundle $\Nor K$ of $K$, normalized such that $\Lambda_j(K,\Nor K)$ is the $j$th intrinsic volume $V_j(K)$ of $K$. Recall that $\Nor K$ consists of all pairs $(x,u), x\in \bd K, u$ an outer normal vector to $K$ at $x$. The image of $\Lambda_j(K,\cdot)$ under the mapping $(x,u)\mapsto x$ is the $j$th curvature measure $\Phi_j(K,\cdot)$ and the image under $(x,u)\mapsto u$ is the $j$th area measure $\Psi_j(K,\cdot )$ of $K$. Recall also that $\Psi_j(K,\cdot )$ has centroid $0$. 

\begin{example} For $j\in\{ 1,...,d-1\}$, we consider the valuation $V_j : {\cal K}\to [0,\infty )$, $K\mapsto V_j(K)$, which has a local extension, namely $\Phi_j : K\mapsto \Phi_j (K,\cdot)$. Let $l = \langle \cdot,x_0\rangle$ be a linear function, $x_0\in\R^d, x_0\not= 0$.  Then $\tilde \Phi_j$, given by
$$
\tilde \Phi_j(K,A) = \Phi_j(K,A) + \int_{\Nor K} {\bf 1}_A(x) l(u) \Lambda_j(K, d(x,u))
$$
for $K\in{\cal K}$ and $A\in{\cal B}$, is another local extension, different from $\Phi_j$, since
\begin{align*}
\tilde\varphi_j (K) &= \tilde\Phi_j (K,\R^d) = \Phi_j (K,\R^d)+ \int_{S^{d-1}} l(u) \Psi_j(K, du)\cr
&= \Phi_j (K,\R^d) = V_j (K) .
\end{align*}
Moreover, if $\| x_0\|\le 1$, then
\begin{align*}
\tilde \Phi_j(K,A) &= \Phi_j(K,A) + \int_{\Nor K} {\bf 1}_A(x) l(u) \Lambda_j(K, d(x,u))\cr
&= \int_{\Nor K} {\bf 1}_A(x) (1+l(u)) \Lambda_j(K, d(x,u))\ge 0 
\end{align*}
such that both local extensions $\Phi_j$ and $\tilde\Phi_j$ are nonnegative (and additive). 
\end{example}

In \cite[Section 11.1]{SW}, a continuous, translation invariant valuation $\varphi$ on ${\cal K}$ was called a {\it standard functional} and standard functionals admitting a local extension were considered (in addition to the conditions which we imposed on a local extension, non-negativity was also required in \cite{SW}). As a consequence of Theorem \ref{th1}, every local functional $\varphi$ on ${\cal K}$ is a standard functional (with local extension). A major open problem concerns the opposite question. Is every standard functional $\varphi$ on ${\cal K}$ a local functional? Since the restriction of $\varphi$ to ${\cal P}$ has a local extension, this question is closely connected to the problem which standard functionals on $\cal P$ can be extended to standard functionals on $\cal K$. Variants of this, apparently open, problem are discussed in \cite{HHW}, where continuity on ${\cal P}$ with respect to the Hausdorff metric is replaced by appropriate continuity conditions on the associated functions $f_j$ appearing in \eqref{func3}. In particular, there are positive and negative results in \cite{HHW} using different flag measures of convex bodies. The following result is a simple outcome of these considerations. We present it here for completeness, together with its proof.

Let $G(d,j)$ denote the Grassmannian of $j$-dimensional subspaces of $\R^d$, $j\in\{0,...,d-1\}$. We say that a function $f_j$ on ${\wp}_{d-j-1}^{d-1}$ has a {\it continuous density} $h_j$, if $h_j : S^{d-1} \to [0,\infty)$ is a continuous function such that
\begin{equation}\label{densities}
f_j (p) = \int_{L^\bot\cap S^{d-1}} {\bf 1}_p(u)h_j(u) \gomega_{L^\bot}(du)
\end{equation}
for all $L\in G(d,j)$ and all $p\in {\wp}_{d-j-1}^{d-1}$ with $p\subset L^\bot$. Here, $\gomega_{L^\bot}$ denotes the normalized spherical Lebesgue measure in $L^\bot$. 

\begin{theorem}\label{extension}
Let $\varphi$ be a standard functional on ${\cal P}$ with associated functions $f_0,...,f_d$ which have continuous densities $h_0,...,h_d$ ($h_d=f_d=c_d$). Then $\varphi$ can be extended to a standard functional $\varphi$ on ${\cal K}$ and the latter has a local extension $\Phi$ given by 
$$
\Phi (K,A) = \sum_{j=0}^{d-1} \int_{\Nor K} {\bf 1}_A(x)h_j(u) \Lambda_j(K,d(x,u)) + c_d\lambda_K (A)
$$
for $K\in{\cal K}$  and  $A\in{\cal B}$.
\end{theorem}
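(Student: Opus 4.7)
The plan is to take the formula in the statement as the \emph{definition} of $\Phi$ on all of ${\cal K}\times{\cal B}$, to verify the four properties required of a local extension in Section 2, to check that the restriction $\Phi|_{\cal P}$ reproduces the polytopal local extension of $\varphi$ guaranteed by Theorem \ref{th1} (in its polytopal version from \cite{W09}), and finally to set $\varphi(K):=\Phi(K,\R^d)$ as the desired standard functional on ${\cal K}$.

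That $\Phi$ is a kernel is immediate from the measurability of $K\mapsto\Lambda_j(K,\cdot)$, the finiteness of each $\Lambda_j(K,\cdot)$, and the boundedness of the continuous densities $h_j$ on $S^{d-1}$. Translation covariance of $\Phi$ follows from the corresponding translation covariance of $\Lambda_j$ and of $\lambda_K$. Local determinacy uses the fact that support measures are themselves locally determined on the normal bundle: if $K\cap U=M\cap U$ for an open set $U$ and $A\subset U$, then $\Lambda_j(K,\cdot)$ and $\Lambda_j(M,\cdot)$ agree on $U\times S^{d-1}$, and so do the integrals defining $\Phi(K,A)$ and $\Phi(M,A)$. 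For weak continuity of $K\mapsto\Phi(K,\cdot)$ I would apply the classical weak continuity of the support measures $\Lambda_j$ under Hausdorff convergence to the bounded continuous test functions $(x,u)\mapsto g(x)h_j(u)$ with $g\in C_b(\R^d)$, and combine it with the elementary weak continuity of $K\mapsto\lambda_K$.

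The main step is the polytopal identification. For $P\in\cal P$ and $j\in\{0,\dots,d-1\}$ the explicit polytopal formula
$$
\Lambda_j(P,\eta)=\frac{1}{\omega_{d-j}}\sum_{F\in{\cal F}_j(P)}\int_F\int_{n(P,F)}{\bf 1}_\eta(x,u)\,\sigma(du)\,\lambda_F(dx)
$$
holds, where $\sigma$ is the unnormalized $(d-j-1)$-dimensional spherical Lebesgue measure and $\omega_{d-j}$ is its total mass on any great $(d-j-1)$-sphere of $S^{d-1}$. For $L\in G(d,j)$ parallel to $\aff F$ one has $\gomega_{L^\bot}=\sigma/\omega_{d-j}$ on $L^\bot\cap S^{d-1}$, so the density relation \eqref{densities} reads
$$
\frac{1}{\omega_{d-j}}\int_{n(P,F)}h_j(u)\,\sigma(du)=f_j(n(P,F)).
$$
Substituting the polytopal form of $\Lambda_j(P,\cdot)$ into the definition of $\Phi$ then gives
$$
\Phi(P,A)=\sum_{j=0}^{d-1}\sum_{F\in{\cal F}_j(P)}f_j(n(P,F))\lambda_F(A)+c_d\lambda_P(A),
$$
i.e.\ exactly \eqref{func2}--\eqref{func3} applied to $P$, which is the polytopal local extension of $\varphi$.

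Putting the pieces together, $\varphi(K):=\Phi(K,\R^d)$ defines a local functional on $\cal K$ whose restriction to $\cal P$ agrees with the given functional; Theorem \ref{th1} then certifies that this extension is a continuous, translation invariant valuation, i.e., a standard functional on $\cal K$, and $\Phi$ is one of its local extensions. The only non-routine point I foresee is the bookkeeping of the normalization constants $\omega_{d-j}$ and $\gomega_{L^\bot}$ in the polytopal identification; conceptually the argument rests only on the weak continuity, local determinacy, and explicit polytopal form of the support measures, together with the continuous density representation \eqref{densities}.
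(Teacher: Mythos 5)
Your proposal is correct and takes essentially the same route as the paper: the decisive step in both is the polytopal identification of the formula with $\sum_j\sum_F f_j(n(P,F))\lambda_F + c_d\lambda_P$ via the explicit representation of $\Lambda_j(P,\cdot)$ for polytopes and the density relation \eqref{densities} (with the same normalization $\gomega_{F^\bot}=\sigma/\omega_{d-j}$), followed by passage to general bodies through the weak continuity of the support measures. The only difference is presentational: you define $\Phi$ on all of ${\cal K}$ and check the four local-extension axioms explicitly, while the paper starts from the polytopal extension and extends by approximation, leaving those verifications as ``easy to see''.
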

 
\begin{proof} Let $P$ be a polytope. For a face $F$ of $P$, let $F^\bot$ be the linear space orthogonal to $F$. Under our assumptions, the local extension $\Phi$ of $\varphi$ on ${\cal P}$ satisfies
\begin{align*}
\Phi &(P,A) =  \sum_{j=0}^{d-1} \sum_{F\in{\cal F}_j(P)} f_j(n(P,F)) \lambda_F(A)+ c_d\lambda_P(A) \cr
&=  \sum_{j=0}^{d-1} \sum_{F\in{\cal F}_j(P)} \int_{F^\bot\cap S^{d-1}} {\bf 1}_{n(P,F)}(u)h_j(u) \gomega_{F^\bot}(du) \lambda_F(A)+ c_d\lambda_P(A)\cr
&=  \sum_{j=0}^{d-1} \sum_{F\in{\cal F}_j(P)} \int_A\int_{F^\bot\cap S^{d-1}} {\bf 1}_{n(P,F)}(u)h_j(u) \gomega_{F^\bot}(du)\lambda_F(dx)+ c_d\lambda_P(A)\cr
&=  \sum_{j=0}^{d-1} \int_{\Nor P}{\bf 1}_A(x)h_j(u) \Lambda_j(P,d(x,u)) + c_d\lambda_P(A)
\end{align*}
due to the representation of support measures of polytopes (see \cite[(4.3) and (4.18)]{S}).

Now let $K\in{\cal K}$ and $P_k\to K$ be a sequence of polytopes converging to $K$. We define a measure $\Phi(K,\cdot)$ on $\R^d$ by
$$
\Phi(K,A) = \sum_{j=0}^{d-1} \int_{\R^d\times S^{d-1}}{\bf 1}_A(x)h_j(u) \Lambda_j(K,d(x,u)) + c_d\lambda_K(A), \quad A\in{\cal B},
$$
and put $\varphi (K) = \Phi(K,\R^d)$. Then it follows from the weak continuity of support measures that $\Phi(P_k,\cdot)\to\Phi(K,\cdot)$ weakly, and thus also $\varphi(P_k) \to \varphi (K).$ It is easy to see that, in this way, $\varphi$ extends to a standard functional on ${\cal K}$ and $\Phi$ extends to its local extension. \end{proof}

Theorem \ref{extension} shows, in particular, that a standard functional $\varphi$ on ${\cal K}$ admits a local extension, if the restriction of $\varphi$ to ${\cal P}$ has associated functionals $f_0,...,f_d$ with continuous densities. The latter is the case if and only if the homogeneous parts $\varphi^{(0)},...,\varphi^{(d-1)}$ of $\varphi$ have continuous densities $h_0,...,h_{d-1}$, in the sense that
\begin{equation}\label{flagcont}
\varphi^{(j)}(K) = \int_{S^{d-1}}h_j(u) \Psi_j(K,du)
\end{equation}
for all $K\in{\cal K}$.

In order to get a more general result, it would be natural to allow continuous densities $h_j$ of $f_j$ in \eqref{densities} which may depend on the subspace $L$,
$$
f_j (p) = \int_{L^\bot\cap S^{d-1}} {\bf 1}_p(u)h_j(u,L) \omega_{L^\bot}(du).
$$
However, as was shown by an example in \cite{HHW}, this so-called flag continuity of a standard functional $\varphi$ on $\cal P$ is in general not sufficient for an extension to $\cal K$. Therefore, as a variant, a strong flag continuity was defined in \cite{HHW}, which guaranteed the existence of an extension. Strong flag continuity requires that the function $h_j$ on 
$$F(d,d-j)=\{ (u,L) \in S^{d-1}\times G(d,d-j) : u\in L\}$$
lies in the image of a certain integral  transform on $F(d,d-j)$. As a generalization of \eqref{flagcont}, this implies a representation
$$
\varphi^{(j)}(K) = \int_{F(d,d-j)}h_j(u,L) \psi_j(K,d(u,L))
$$
for all $K\in{\cal K}$, where $\psi_j(K,\cdot)$ is the $j$th flag measure of $K$. For details, we refer to \cite{HHW}. We will discuss flag measures again in Section 7, but mention here already that a strongly flag continuous standard functional $\varphi$ on $\cal K$ has a local extension.

\section{Translative integral formulas}

For local functionals $\varphi$ on ${\cal K}$ with local extension $\Phi$ and $j$-homogeneous parts $\Phi^{(j)}$, $j=0,...,d$, a translative integral formula follows by approximation with polytopes, parallel to the treatment of curvature measures and intrinsic volumes in Sections 5.2 and 6.4 of \cite{SW} and based on the corresponding result for polytopes (Theorem 5.1 in \cite{W09}). In the following result, we therefore leave out some parts of the proof and concentrate on the approximation argument. As in \cite{W09}, we denote the translate $A+x$ of a set $A\subset\R^d$ by $A^x$. We also define $\Phi( \emptyset,\cdot)=0$, and thus $\varphi(\emptyset)=0$.

\begin{theorem}\label{trans3} Let $\varphi$ be a local functional on ${\cal K}$, let $\Phi$ be a local extension and let $\varphi^{(j)}, \Phi^{(j)}$ be the $j$-homogeneous parts of $\varphi$ and $\Phi$, $j=0,\dots ,d$. Then, 
for $k\ge 2$, convex bodies $K_1,...,K_k \in {\cal K}$ and Borel sets  $A_1,...,A_k \in {\cal B}$, there are mixed measures $\Phi^{(j)}_{m_1,\dots,m_k}(K_1,\dots,K_k;\cdot)$ on ${({\mathbb R}^d)^{k}}$ such that
\begin{align}
&  \int_{({\mathbb R}^d)^{k-1}}
\Phi^{(j)}(K_1\cap K_2^{x_2}\cap \dots \cap K_k^{x_k},A_1\cap A_2^{x_2}\cap \dots \cap A_k^{x_k})\, \lambda^{k-1} (d(x_2,\dots ,x_k))\nonumber \\
& = \sum_{{m_1,\dots,m_k=j}\atop{m_1+\dots +m_k=(k-1)d+j}}^d
\Phi^{(j)}_{m_1,\dots,m_k}(K_1,\dots,K_k;A_1\times \dots \times A_k).\label{transint2}
\end{align}

The measure $\Phi^{(j)}_{m_1,\dots,m_k}(K_1,\dots,K_k;\cdot)$ depends continuously on
$K_1,...,K_k \in {\cal K}$ and is homogeneous of degree $m_i$  in $K_i$. For polytopes $K_1,...,K_k$, it coincides with the one appearing in Theorem 5.1 of \cite{W09}.
 
The total measures $\varphi^{(j)}_{m_1,\dots,m_k}(K_1,\dots,K_k) = \Phi^{(j)}_{m_1,\dots,m_k}(K_1,\dots,K_k;(\R^d)^k)$ satisfy the iterated translative formula
\begin{align}
&  \int_{({\mathbb R}^d)^{k-1}}
\varphi^{(j)}(K_1\cap K_2^{x_2}\cap \dots \cap K_k^{x_k})\, \lambda^{k-1} (d(x_2,\dots ,x_k))\nonumber \\
& = \sum_{{m_1,\dots,m_k=j}\atop{m_1+\dots +m_k=(k-1)d+j}}^d
\varphi^{(j)}_{m_1,\dots,m_k}(K_1,\dots,K_k).\label{transint3}
\end{align}
\end{theorem}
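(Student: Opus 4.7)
The plan is to derive \eqref{transint2} from Theorem~5.1 of \cite{W09} by polytope approximation, following the template used for translative integral formulas of curvature measures in \cite[Sections~5.2 and 6.4]{SW}.

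First I would show that, for fixed Borel sets $A_1,\dots,A_k$, the left-hand side
\[
L(K_1,\dots,K_k) := \int_{(\R^d)^{k-1}} \Phi^{(j)}\bigl(K_1\cap K_2^{x_2}\cap\cdots\cap K_k^{x_k},\, A_1\cap A_2^{x_2}\cap\cdots\cap A_k^{x_k}\bigr)\, \lambda^{k-1}(d(x_2,\dots,x_k))
\]
depends continuously on $(K_1,\dots,K_k)\in{\cal K}^k$. The integrand vanishes outside a subset of $(\R^d)^{k-1}$ whose diameter is controlled by those of the $K_i$, and its absolute value is majorised, uniformly on Hausdorff-bounded neighbourhoods of the $K_i$, by the total variation of $\Phi^{(j)}$ evaluated on intersections drawn from a Hausdorff-bounded family; by weak continuity of $\Phi^{(j)}$ (Theorem~\ref{th1}) this total variation is locally uniformly bounded. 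For Hausdorff-convergent sequences $K_i^{(n)}\to K_i$, Lemma~\ref{Rolf} of the Appendix yields Hausdorff convergence of $K_1^{(n)}\cap(K_2^{(n)})^{x_2}\cap\cdots\cap(K_k^{(n)})^{x_k}$ at every $(x_2,\dots,x_k)$ for which $K_1\cap K_2^{x_2}\cap\cdots\cap K_k^{x_k}$ has nonempty interior; these translations form a set of full $\lambda^{k-1}$-measure. Weak continuity of $\Phi^{(j)}$ then gives pointwise a.e.\ convergence of the integrand, and dominated convergence yields the continuity of $L$.

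Second I would take polytopal approximations $P_i^{(n)}\to K_i$ and apply Theorem~5.1 of \cite{W09} to get
\[
L\bigl(P_1^{(n)},\dots,P_k^{(n)}\bigr) = \sum_{\substack{m_1,\dots,m_k=j\\ m_1+\cdots+m_k=(k-1)d+j}}^d \Phi^{(j)}_{m_1,\dots,m_k}\bigl(P_1^{(n)},\dots,P_k^{(n)};\, A_1\times\cdots\times A_k\bigr).
\]
The polytopal mixed measures have uniformly bounded total variation and supports contained in a common bounded subset of $(\R^d)^k$, so Prokhorov's theorem provides subsequential weak limits. To show that the limit is unique, independent of the approximating polytopal sequence, and therefore defines a measure $\Phi^{(j)}_{m_1,\dots,m_k}(K_1,\dots,K_k;\cdot)$, I would exploit the distinct multi-homogeneity degrees of the individual summands: each $\Phi^{(j)}_{m_1,\dots,m_k}(\cdot;A_1\times\cdots\times A_k)$ is of degree $m_i$ in the $i$-th argument under the appropriate joint scaling, so evaluating the polytopal identity at sufficiently many scaled configurations $(\alpha_1 P_1^{(n)},\dots,\alpha_k P_k^{(n)})$ and inverting the resulting linear system (which has full rank by the distinctness of the admissible multi-indices) expresses each mixed measure as a continuous linear combination of values of $L$ at rescaled configurations. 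Step~1 then forces each polytopal mixed measure to converge weakly along the whole sequence to a measure that is continuous in $(K_1,\dots,K_k)$.

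Passing to the limit in the polytopal identity, using Step~1 for $L$ and weak convergence for the mixed measures, yields \eqref{transint2} for general $K_1,\dots,K_k\in{\cal K}$. The continuity in the $K_i$, the multi-homogeneity, and the identification with the polytopal mixed measures transfer from the approximants by weak convergence, and \eqref{transint3} is the special case $A_i=\R^d$. I expect the main obstacle to be Step~1: since $\Phi^{(j)}(K,\cdot)$ may be a signed measure and the intersection $K_1\cap K_2^{x_2}\cap\cdots$ can degenerate for exceptional translations, the dominated-convergence argument rests essentially on the combination of Lemma~\ref{Rolf}, the weak continuity of $\Phi^{(j)}$ guaranteed by Theorem~\ref{th1}, and a locally uniform total-variation bound on Hausdorff-bounded families of convex bodies.
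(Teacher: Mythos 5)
Your Step~1 contains a genuine gap: for \emph{fixed Borel sets} $A_1,\dots,A_k$ the functional $L(K_1,\dots,K_k)$ is in general \emph{not} continuous in the bodies, and weak continuity of $\Phi^{(j)}$ does not give the pointwise a.e.\ convergence of the integrand you claim. Weak convergence $\Phi^{(j)}(K_n,\cdot)\to\Phi^{(j)}(K,\cdot)$ yields convergence of values only on continuity sets of the limit measure, whereas the sets $A_1\cap A_2^{x_2}\cap\dots\cap A_k^{x_k}$ are arbitrary Borel sets whose boundaries may carry mass: take $j=d-1$ and $A_1$ a subset of a supporting hyperplane containing a facet of $K_1$; then $\Phi^{(d-1)}(K_1\cap\cdots,A_1\cap\cdots)$ can be positive while an arbitrarily small perturbation of $K_1$ makes it vanish, and integrating over the translations $x_2,\dots,x_k$ does not repair this discontinuity in $K_1$. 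This is exactly why the paper does not work with Borel sets in the approximation step: it passes to the weak formulation \eqref{6.4.4}, i.e.\ integrals of continuous test functions $f$ on $(\R^d)^k$ against $\Phi^{(j)}(K_1\cap K_2^{x_2}\cap\dots,\cdot)$ in skewed coordinates, proves continuity of the resulting functional $J(f,K_1,\dots,K_k)$, deduces from the scaling (polynomial) expansion the weak convergence of the polytopal mixed measures (your linear-system/homogeneity argument in Step~2 is essentially this same device), and only at the end recovers \eqref{transint2} as an identity of measures via the equivalence of \eqref{transint2} with \eqref{6.4.4} (see the reference to \cite[formula (6.16)]{SW}). So your overall architecture can be saved, but the object whose continuity you establish must be $J(f,\cdot)$ for continuous $f$, not $L$ for fixed Borel sets.

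Two further points. First, Lemma~\ref{Rolf} is not the tool for the a.e.\ Hausdorff convergence of intersections: it concerns simultaneous polytopal approximation of bodies with convex union and is used for the additivity statement in Theorem~\ref{th1}; the relevant fact about convergence of $K_1^{(n)}\cap (K_2^{(n)})^{x_2}\cap\cdots$ for almost all translations is the one invoked in \cite[p.~188]{SW}. Second, a locally uniform total-variation bound does not follow from weak continuity alone (total variation is only lower semicontinuous under weak convergence); the paper instead dominates the integrand by $C_{K_1}\cdot{\bf 1}_{K_1-K_i}$ with $C_{K_1}=\max\{|\varphi^{(j)}(K')| : K'\subset K_1,\ K'\in{\cal K}\}$, which is finite by the continuity of $\varphi^{(j)}$ on the compact family of convex subsets of $K_1$; you should use a bound of this type for the dominated convergence argument.
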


\begin{proof} If $K_1,...,K_k$ are polytopes, the assertions follow from Theorem 5.1 and Corollary 5.2 in \cite{W09}. Therefore, in this case, the mixed measures\linebreak $\Phi^{(j)}_{m_1,\dots,m_k}(K_1,\dots,K_k;\cdot)$ exist and the iterated translative formula for the local extension $\Phi^{(j)}$ holds for all Borel sets $A_1,...,A_k$ and is equivalent to
\begin{align}\label{6.4.4}
&  \int_{({\mathbb R}^d)^{k-1}} \int_{{\mathbb R}^d}f(x_1,x_1-x_2,\dots, x_1-x_k)\,\Phi^{(j)}(K_1\cap K_2^{x_2}\cap \dots \cap K_k^{x_k},d x_1)\nonumber\\
& \quad\times\, \lambda^{k-1} (d (x_2,\dots ,x_k)) \\
&  = \sum_{{m_1,\dots,m_k=j}\atop{m_1+\dots +m_k=(k-1)d+j}}^d \hspace{-3pt} \int_{({\mathbb R}^d)^k} f(x_1,\dots ,x_k)\,
\Phi^{(j)}_{m_1,\dots,m_k}(K_1,\dots,K_k;d(x_1,\dots ,x_k))\nonumber
\end{align}
for all continuous functions $f$ on $({\mathbb R}^d)^k$ (see \cite[formula (6.16)]{SW}, for the necessary arguments). 

As in \cite[p. 232]{SW}, we consider the functional $J(f,K_1,\dots,K_k)$ defined by the left side of \eqref{6.4.4}, for all $K_1,\dots,K_k\in {\cal K}$. Due to the weak continuity of the kernel $\Phi^{(j)}$, this functional depends continuously on $K_1,\dots,K_k$. Since the intersection of convex bodies is not a continuous operation, some additional arguments are necessary here, see \cite[p. 188]{SW}. One tool used there is the dominated convergence theorem which requires, for $K, M\in {\cal K}$, a $\lambda$-integrable upper bound of
$$
y\mapsto | \varphi^{(j)}(K\cap M^y)| .$$
We can use $C_K\cdot{\bf 1}_{K-M}(y)$ as such a bound, where 
$$C_K = \max\{ |\varphi^{(j)}(K')| : K'\subset K, K'\in{\cal K}\} ,$$ 
which is finite since $\varphi^{(j)}$ is continuous.

For $r_1,\dots,r_k > 0$ and a continuous function $f$ on $(\R^d)^k$, we define a continuous function $D_{r_1,\dots ,r_k}f$ on $({\mathbb R}^d)^k$  by
$$
D_{r_1,\dots ,r_k}f (x_1,\dots ,x_k) = f\left(\frac{x_1}{r_1},\dots, \frac{x_k}{r_k}\right)\qquad {\rm for}\ x_1,\dots, x_k\in {\mathbb R}^d .
$$
For polytopes $K_1,\dots ,K_k$, relation (\ref{6.4.4}) and the homogeneity properties of the mixed measures imply
\begin{align*}
&  J(D_{r_1,\dots, r_k}f,r_1K_1,\dots ,r_kK_k)\\
&  = \int_{({\mathbb R}^d)^{k-1}} \int_{{\mathbb R}^d}f\left(\frac{x_1}{r_1},\frac{x_1-x_2}{r_2},\dots, \frac{x_1-x_k}{r_k}\right)\\
&  \hspace*{4mm}\times\,\Phi^{(j)}(r_1K_1 \cap (r_2K_2)^{x_2}\cap\dots\cap (r_kK_k)^{x_k}, d x_1) \,\lambda^{k-1}(d(x_2,\dots,x_k))\\
&  = \sum_{{m_1,\dots,m_k=j}\atop{m_1+\dots +m_k=(k-1)d+j}}^d \\
&   \hspace*{4mm}\int_{({\mathbb R}^d)^k}
f\left(\frac{x_1}{r_1},\dots ,\frac{x_k}{r_k} \right)\, \Phi^{(j)}_{m_1,\dots,m_k}(r_1K_1,\dots,r_2K_k;d (x_1,\dots ,x_k)) \\
& = \sum_{{m_1,\dots,m_k=j}\atop{m_1+\dots +m_k=(k-1)d+j}}^d r_1^{m_1}\cdots r_k^{m_k}\\
&  \hspace*{4mm}\times\,\int_{({\mathbb R}^d)^k} f(x_1,\dots ,x_k)\,
\Phi^{(j)}_{m_1,\dots,m_k}(K_1,\dots,K_k;d (x_1,\dots ,x_k)).
\end{align*}

For arbitrary convex bodies $K_1,\dots ,K_k$, we choose sequences of polytopes $K_{1i},\dots K_{ki}$, $i \in{\mathbb N}$, such that $K_{1i}\to K_1$, \dots, $K_{ki}\to K_k$ for $i\to\infty$. Then
\[ J(D_{r_1,\dots ,r_k}f,r_1K_{1i},\dots ,r_kK_{ki}) \to J(D_{r_1,\dots ,r_k}f,r_1K_1,\dots,r_kK_k) \]
for every continuous function $f$ on $({\mathbb R}^d)^k$ and all $r_1,\dots ,r_k > 0$. From the polynomial expansion just established, we deduce the convergence of the coefficients
\[ \int_{({\mathbb R}^d)^k} f(x_1,\dots ,x_k)\,
\Phi^{(j)}_{m_1,\dots,m_k}(K_{1i},\dots,K_{ki};d (x_1,\dots ,x_k)) \]
and thus the weak convergence of the measures
\[ \Phi^{(j)}_{m_1,\dots,m_k}(K_{1i},\dots,K_{ki};\cdot ) \]
for $i \to \infty$. The limits, denoted by
$\Phi^{(j)}_{m_1,\dots,m_k}(K_{1},\dots,K_{k};\cdot ) $, are again finite
measures, satisfying
\begin{eqnarray}\label{6.4.5}
& & J(D_{r_1,\dots ,r_k}f,r_1K_1,\dots,r_kK_k) \nonumber\\
& &  = \sum_{{m_1,\dots,m_k=j}\atop{m_1+\dots +m_k=(k-1)d+j}}^d r_1^{m_1}\cdots r_k^{m_k}\nonumber\\
& & \hspace*{4mm}\times\,\int_{({\mathbb R}^d)^k} f(x_1,\dots ,x_k)\,
\Phi^{(j)}_{m_1,\dots,m_k}(K_1,\dots,K_k;d (x_1,\dots ,x_k)), \hspace*{7mm}
\end{eqnarray}
from which we see that they are independent of the approximating
sequences $(K_{1i})_{i\in{\mathbb N}},\dots, (K_{ki})_{i\in{\mathbb N}}$. For $r_1 = \dots = r_k= 1$, we obtain
(\ref{6.4.4}).

Thus, mixed measures for arbitrary bodies $K_1,\dots,K_k$ are defined which fulfill the iterated translation formula \eqref{transint2}. The formula \eqref{transint3} for the scalar functionals is a consequence.
\end{proof}

For further properties of the mixed measures $\Phi^{(j)}_{m_1,\dots,m_k}(K_1,\dots,K_k;\cdot)$ and mixed functionals $\varphi^{(j)}_{m_1,\dots,m_k}$, like symmetry, decomposability, translation covariance (resp. invariance) and local determination, we refer to \cite[Section 6.4]{SW}, where corresponding results are discussed for the mixed measures and functionals of intrinsic volumes and curvature measures. We just mention that the decomposability means that the mixed expressions split if one of the parameters $m_i$ is $d$. For example, $\Phi^{(j)}_{j,d}(K,M;\cdot)= \Phi^{(j)}(K,\cdot)\otimes \lambda_M$ and $\varphi^{(j)}_{j,d}(K,M)= \varphi^{(j)}(K)\lambda(M)$. In particular, we have
$$
\Phi^{(d)}_{d,d}(K,M;\cdot) = \Phi^{(d)}(K,\cdot)\otimes \lambda_M = c_d( \lambda_K\otimes\lambda_M)
$$
and
$$
\varphi^{(d)}_{d,d}(K,M) = c_dV_d(K)V_d(M).
$$
In view of the symmetry, the translative formulas for $k=2$ and $j<d$ therefore read
\begin{align}\label{2-case}
  \int_{{\mathbb R}^d} &\Phi^{(j)}(K \cap M^x, A\cap B^x)\,\lambda(dx)= \Phi^{(j)}(K,A) \lambda (M\cap B\nonumber)\\
&    + \sum_{m=j+1}^{d-1}\Phi_{m,d+j-m}^{(j)}(K,M;A \times B) +\lambda (K\cap A)\Phi^{(j)}(M,B)
\end{align}
and
\begin{align*}
 \int_{{\mathbb R}^d} &\varphi^{(j)}(K \cap M^x) \, \lambda(d x)
= \varphi^{(j)}(K)\lambda (M)\\
& + \sum_{m=j+1}^{d-1}\varphi_{m,d+j-m}^{(j)}(K,M) +
\lambda (K)\varphi^{(j)}(M) .
\end{align*}
We also emphasize that $\Phi^{(j)}_{m_1,\dots,m_k}(K_1,\dots,K_k;\cdot)$ and  $\varphi^{(j)}_{m_1,\dots,m_k}(K_1,\dots, K_k)$ are additive in each of the variables $K_1,\dots ,K_k$.

We remark that Theorem \ref{trans3} yields an extension of the results in \cite[Section 11.1]{SW}. There, for a standard functional $\varphi$ with local extension $\Phi$, associated kernels $\Phi_{(k)}$ were introduced through the $k$-fold translative integral. Theorem \ref{trans3} now shows that the associated kernels $\Phi_{(k)}$ can be developed into a sum of mixed kernels, as in the case of intrinsic volumes and curvature measures. We shall exploit this fact further in Section 6 when we investigate Boolean  models. 

By a slight variation of the argument in the above proof, we obtain a further extension, namely a translative integral formula and its iteration for standard functionals $\varphi$ on ${\cal K}$ which are not necessarily local.

\begin{theorem}\label{trans4} Let $\varphi$ be a standard functional on ${\cal K}$ and let $\varphi^{(j)}$ be its $j$-homogeneous part, $j=0,...,d$, with $\varphi^{(d)} =V_d$.  Then, 
for $k\ge 2$ and convex bodies $K_1,...,K_k \in {\cal K}$, there are mixed functionals $\varphi^{(j)}_{m_1,\dots,m_k}$ on ${({\cal K})^{k}}$ such that
\begin{align}
&  \int_{({\mathbb R}^d)^{k-1}}
\varphi^{(j)}(K_1\cap K_2^{x_2}\cap \dots \cap K_k^{x_k})\, \lambda^{k-1} (d(x_2,\dots ,x_k))\nonumber \\
& = \sum_{{m_1,\dots,m_k=j}\atop{m_1+\dots +m_k=(k-1)d+j}}^d
\varphi^{(j)}_{m_1,\dots,m_k}(K_1,\dots,K_k).\label{6.4.0}
\end{align}
The mapping $(K_1,...,K_k)\mapsto \varphi^{(j)}_{m_1,\dots,m_k}(K_1,\dots,K_k)$ is symmetric (w.r.t. permutations of the indices $1,...,k$), it is homogeneous of degree $m_i$ in $K_i$ and it is a standard functional in each of its variables $K_i$. 
\end{theorem}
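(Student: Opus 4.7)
The plan is to mirror the approximation strategy used in the proof of Theorem~\ref{trans3}, but working purely at the scalar level, since no local extension is available. First I would dispose of the polytope case: because $\varphi$ is a continuous translation invariant valuation on ${\cal K}$, the remark in the introduction tells us that its restriction to ${\cal P}$ is a local functional on ${\cal P}$ in the sense of \cite{W09}, so Corollary~5.2 of \cite{W09} gives \eqref{6.4.0} for polytope inputs $K_1,\dots,K_k$, with mixed functionals $\varphi^{(j)}_{m_1,\dots,m_k}$ that are already symmetric and homogeneous of degree $m_i$ in $K_i$.

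Next I would introduce the iterated translative integral
\[
J(K_1,\dots,K_k) := \int_{(\R^d)^{k-1}} \varphi^{(j)}\bigl(K_1\cap K_2^{x_2}\cap\dots\cap K_k^{x_k}\bigr)\,\lambda^{k-1}(d(x_2,\dots,x_k))
\]
and establish that $J$ is jointly continuous in $K_1,\dots,K_k\in{\cal K}$. This is the technical heart of the argument: since intersection is not a Hausdorff-continuous operation, I would adapt the dominated-convergence device from the proof of Theorem~\ref{trans3}, bounding $y\mapsto |\varphi^{(j)}(K\cap M^y)|$ by $C_K\,{\bf 1}_{K-M}(y)$ with $C_K=\sup\{|\varphi^{(j)}(K')|: K'\in{\cal K},\,K'\subset K\}$, which is finite by continuity of $\varphi^{(j)}$ and Hausdorff-compactness of the family of sub-bodies of $K$, and iterating through the $k-1$ translation variables.

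A dilation device then extracts the mixed functionals. For polytopes, scaling $K_i\mapsto r_iK_i$ together with the polytope polynomial expansion gives
\[
J(r_1 K_1,\dots,r_k K_k) = \sum_{\substack{m_1,\dots,m_k=j\\ m_1+\dots+m_k=(k-1)d+j}}^d r_1^{m_1}\cdots r_k^{m_k}\,\varphi^{(j)}_{m_1,\dots,m_k}(K_1,\dots,K_k).
\]
Approximating arbitrary $K_1,\dots,K_k\in{\cal K}$ by polytope sequences $K_{1i}\to K_1,\dots,K_{ki}\to K_k$, the joint continuity of $J$ forces pointwise convergence of the right-hand polynomials as functions of $r_1,\dots,r_k>0$. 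Since these polynomials have uniformly bounded total degree, their coefficients converge (say by Lagrange interpolation from finitely many tuples), and the limits $\varphi^{(j)}_{m_1,\dots,m_k}(K_1,\dots,K_k)$ are independent of the approximating sequences. Setting $r_1=\dots=r_k=1$ yields~\eqref{6.4.0} in general.

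The asserted properties then transfer from the polytope case: symmetry and $m_i$-homogeneity in $K_i$ pass to the limit directly, translation invariance in each variable is an easy change-of-variables in the defining integral, and continuity in a single variable is a special case of Step~2. For the valuation property in each $K_i$, I would observe that the homogeneous part $\varphi^{(j)}$ of a standard functional is itself a continuous translation invariant valuation (via McMullen's polynomial decomposition of continuous translation invariant valuations), so for polytopes the integrand, and hence $J$, is additive in each $K_i$, forcing additivity of each polynomial coefficient; passage to ${\cal K}$ then follows by continuity together with the approximation tool Lemma~\ref{Rolf} in the Appendix. The main obstacle is exactly the joint continuity of $J$ in Step~2; once that dominated-convergence detour is in place, the rest of the proof is an essentially routine rerun of the scheme of Theorem~\ref{trans3} at the scalar level.
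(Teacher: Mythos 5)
Your proposal is correct and follows essentially the same route as the paper: reduce to the polytopal case via \cite{W09} (Corollary 5.2), establish continuity of the iterated translative integral $J$ by dominated convergence with the bound $C_K\,{\bf 1}_{K-M}$, use the dilation/polynomial-expansion device to extract the mixed functionals, and pass to general bodies by polytopal approximation with $r_1=\dots=r_k=1$. Your explicit verification of symmetry, homogeneity, and additivity in each variable (via coefficient comparison and Lemma \ref{Rolf}) only fills in details the paper leaves implicit, so there is no substantive difference in method.
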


\begin{proof} Due to Theorem 4.1 in \cite{W09}, the restriction of $\varphi^{(j)}$ to ${\cal P}$ is a local functional. Therefore, \cite[Corollary 5.2]{W09} implies that, for polytopes $K_1,...,K_k$, the mixed functionals $\varphi^{(j)}_{m_1,\dots,m_k}(K_1,...,K_k)$ exist and that the iterated translation formula \eqref{6.4.0} is satisfied. As in the proof of Theorem \ref{trans3} we consider the functional 
$$
J(K_1,...,K_k) = \int_{({\mathbb R}^d)^{k-1}}
\varphi^{(j)}(K_1\cap K_2^{x_2}\cap \dots \cap K_k^{x_k})\, \lambda^{k-1} (d(x_2,\dots ,x_k))
$$
for $K_1,...,K_k\in{\cal K}$ and show that it depends continuously on the bodies $K_i$ by using the dominated convergence theorem with the same upper bound
$$
 | \varphi^{(j)}(K\cap M^y)| \le C_K\cdot{\bf 1}_{K-M}(y)
$$
with 
$$C_K = \max\{ |\varphi^{(j)}(K')| : K'\subset K, K'\in{\cal K}\} .$$
The functional $J(K_1,...,K_k)$ corresponds to $J(1,K_1,...,K_k)$ in the proof of Theorem \ref{trans3}. Therefore, we obtain in the same manner, for polytopes $K_1,\dots ,K_k$, 
\begin{align*}
&  J(r_1K_1,\dots ,r_kK_k)\\
& = \int_{({\mathbb R}^d)^{k-1}} \varphi^{(j)}(r_1K_1 \cap (r_2K_2)^{x_2}\cap\dots\cap (r_kK_k)^{x_k}) \,\lambda^{k-1}(d(x_2,\dots,x_k))\\
& = \sum_{{m_1,\dots,m_k=j}\atop{m_1+\dots +m_k=(k-1)d+j}}^d 
 \varphi^{(j)}_{m_1,\dots,m_k}(r_1K_1,\dots,r_2K_k) \\
& = \sum_{{m_1,\dots,m_k=j}\atop{m_1+\dots +m_k=(k-1)d+j}}^d r_1^{m_1}\cdots r_k^{m_k}
\varphi^{(j)}_{m_1,\dots,m_k}(K_1,\dots,K_k).
\end{align*}

For arbitrary convex bodies $K_1,\dots ,K_k$, we choose again sequences of polytopes $K_{1i},\dots K_{ki}$, $i \in{\mathbb N}$, with $K_{1i}\to K_1$, \dots, $K_{ki}\to K_k$ for $i\to\infty$. Then, the continuity of $J$ and  the polynomial expansion just established imply the convergence of the mixed functionals
\[ \varphi^{(j)}_{m_1,\dots,m_k}(K_{1i},\dots,K_{ki} ) \]
for $i \to \infty$. The limit functionals
$\varphi^{(j)}_{m_1,\dots,m_k}(K_{1},\dots,K_{k}) $ satisfy
\begin{align*}
 &J(r_1K_1,\dots,r_kK_k)  = \sum_{{m_1,\dots,m_k=j}\atop{m_1+\dots +m_k=(k-1)d+j}}^d r_1^{m_1}\cdots r_k^{m_k} \varphi^{(j)}_{m_1,\dots,m_k}(K_1,\dots,K_k), 
\end{align*}
and are thus independent of the approximating
sequences $(K_{1i})_{i\in{\mathbb N}},\dots, (K_{ki})_{i\in{\mathbb N}}$. For $r_1 = \dots = r_k= 1$, we obtain
(\ref{6.4.0}).
\end{proof}

\section{Kinematic formulas}

Hadwiger's general integral theorem (see \cite[Theorem 5.1.2]{SW}) shows that, for an additive and continuous functional $\varphi$ on ${\cal K}$ and convex bodies $K,M$,
\begin{equation}\label{hadwiger}
\int_{G_d} \varphi (K\cap gM) \mu (dg) = \sum_{k=0}^d \varphi_{d-k}(K)V_k(M)
\end{equation}
with certain coefficients $\varphi_{d-k}(K)$ which are functionals in $K$ given by Crofton-type integrals. Here, $G_d$ is the group of (proper) rigid motions and $\mu$ the (suitably normalized) invariant measure on $G_d$. Note that translation invariance of $\varphi$ is not required here. A local version of \eqref{hadwiger} was proved by Schneider \cite{Schn94} (see also \cite[Section 5.3, Note 5]{SW}). In the following, we give an alternative proof of \eqref{hadwiger} and its local variant for local (hence translation invariant) functionals $\varphi$ on ${\cal K}$ with local extension $\Phi\ge 0$  (by Theorem \ref{th1}, $\varphi$ is additive).

\begin{theorem} Let $\varphi$ be a local functional on ${\cal K}$ with local extension $\Phi\ge 0$ and with  $j$-homogeneous parts $\varphi^{(j)}, \Phi^{(j)}$, $j=0,...,d$. Then there are local functionals $\varphi^{(j)}_m$ on ${\cal K}$ with local extension $\Phi^{(j)}_m\ge 0$, $0\le j\le m\le d$, such that
\begin{align}\label{hadwiger2a}
\int_{G_d} &\Phi^{(j)} (K\cap gM,A\cap gB) \mu (dg)\nonumber\\
&= \Phi^{(j)}(K,A)\lambda_M(B)+\sum_{m=j+1}^{d} \Phi_{m}^{(j)}(K,A)\Phi_{d+j-m}(M,B)
\end{align}
and
\begin{equation}\label{hadwiger2}
\int_{G_d} \varphi^{(j)} (K\cap gM) \mu (dg)
= \varphi^{(j)}(K)V_{d}(M)+\sum_{m=j+1}^{d} \varphi_{m}^{(j)}(K)V_{d+j-m}(M)
\end{equation}
for all $K,M\in{\cal K}$, all Borel sets $A,B\in{\cal B}$ and $j=0,\dots ,d$.
\end{theorem}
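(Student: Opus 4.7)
The plan is to derive the kinematic formulas by integrating the two-variable translative formula \eqref{2-case} over the rotation group. Writing each motion as $g = \rho\circ t_x$ with $\rho\in SO_d$ and $t_x$ the translation by $x\in\R^d$, the invariant measure on $G_d$ decomposes as $\mu = \nu\otimes\lambda$ with $\nu$ the Haar probability measure on $SO_d$, so Fubini gives
\begin{equation*}
\int_{G_d}\Phi^{(j)}(K\cap gM,A\cap gB)\,\mu(dg) = \int_{SO_d}\int_{\R^d}\Phi^{(j)}(K\cap(\rho M)^x,A\cap(\rho B)^x)\,\lambda(dx)\,\nu(d\rho).
\end{equation*}
Applying \eqref{2-case} to the inner integral (with $\rho M,\rho B$ in place of $M,B$) and noting that $\lambda(\rho M\cap\rho B)=\lambda_M(B)$, one splits the integrand into the $\rho$-independent leading term $\Phi^{(j)}(K,A)\lambda_M(B)$, the middle mixed terms $\Phi^{(j)}_{m,d+j-m}(K,\rho M;A\times\rho B)$ for $m=j+1,\dots,d-1$, and a last term $\lambda_K(A)\Phi^{(j)}(\rho M,\rho B)$. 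The leading term is already of the desired form and provides the $\Phi^{(j)}(K,A)\lambda_M(B)$-piece of \eqref{hadwiger2a}.

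The central step is to evaluate the remaining rotation averages. For fixed $K,A$ set
\begin{equation*}
\Xi_m(M,B) := \int_{SO_d}\Phi^{(j)}_{m,d+j-m}(K,\rho M;A\times\rho B)\,\nu(d\rho),\quad m\in\{j+1,\dots,d-1\},
\end{equation*}
and $\Xi_d(M,B) := \int_{SO_d}\Phi^{(j)}(\rho M,\rho B)\,\nu(d\rho)$. Inheriting the relevant properties from the mixed measures (respectively from $\Phi^{(j)}$), each $\Xi_m(M,\cdot)$ is a finite Borel measure; and as a kernel in $(M,B)$ it is additive and continuous in $M$, translation covariant, $SO_d$-equivariant (by invariance of $\nu$), locally determined, and homogeneous of degree $d+j-m$ in $M$. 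A local Hadwiger-type characterization of rotation- and translation-equivariant, continuous, additive, locally determined, $k$-homogeneous kernels on $\cal K$ as scalar multiples of $\Phi_k$ (in the spirit of Schneider \cite{Schn94} and \cite[Sec.~6.4]{SW}) then yields scalars $\Phi^{(j)}_m(K,A)$ ($m=j+1,\dots,d-1$) and a universal constant $\kappa$ with
\begin{equation*}
\Xi_m(M,B)=\Phi^{(j)}_m(K,A)\Phi_{d+j-m}(M,B)\quad\text{and}\quad \Xi_d(M,B)=\kappa\,\Phi_j(M,B).
\end{equation*}
Putting $\Phi^{(j)}_d(K,A):=\kappa\lambda_K(A)$ absorbs the $\lambda_K(A)$-prefactor from the last translative term; collecting contributions then yields \eqref{hadwiger2a}, and the scalar identity \eqref{hadwiger2} is its $A=B=\R^d$ specialization via $\Phi_{d+j-m}(M,\R^d)=V_{d+j-m}(M)$.

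It remains to verify that the so-defined $\Phi^{(j)}_m$ are local extensions of local functionals $\varphi^{(j)}_m$ on $\cal K$. Fixing any reference pair $(M_0,B_0)$ with $\Phi_{d+j-m}(M_0,B_0)>0$ represents $\Phi^{(j)}_m(K,A)=\Xi_m(M_0,B_0)/\Phi_{d+j-m}(M_0,B_0)$, whence finiteness as a Borel measure in $A$, weak continuity and translation covariance in $K$, local determination, and nonnegativity (the last inherited from $\Phi^{(j)}\ge 0$, which forces $\Phi^{(j)}_{m,d+j-m}\ge 0$ via the polytopal case in \cite{W09} and the approximation in the proof of Theorem~\ref{trans3}) all transfer from the corresponding properties of $\Phi^{(j)}_{m,d+j-m}(\cdot,M_0;\cdot\times B_0)$. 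The main obstacle is precisely the local Hadwiger-type characterization invoked in the central step: the scalar Hadwiger theorem handles only the total-mass version $\Xi_m(M,\R^d)$, and upgrading to the measure-valued kernel setting needed here requires either a careful citation of the corresponding result for translation-covariant measure-valued valuations or a self-contained proof patterned on Schneider's local kinematic formula for curvature measures.
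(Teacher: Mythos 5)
Your reduction---writing $\mu=\nu\otimes\lambda$, inserting \eqref{2-case}, and isolating the rotation averages $\Xi_m(M,B)=\int_{SO_d}\Phi^{(j)}_{m,d+j-m}(K,\rho M;A\times\rho B)\,\nu(d\rho)$---is exactly how the paper's proof begins, so the framework is right. The genuine gap is the step you yourself flag: the factorization $\Xi_m(M,B)=\Phi^{(j)}_m(K,A)\,\Phi_{d+j-m}(M,B)$ is made to rest on a ``local Hadwiger-type characterization'' of translation-covariant, rotation-equivariant, continuous, additive, locally determined, homogeneous measure-valued kernels as multiples of curvature measures, which you neither state precisely nor prove, and which is not contained in the sources you point to (\cite{Schn94} proves a local kinematic formula, not a characterization; \cite[Section 6.4]{SW} treats mixed measures of curvature measures; the scalar Hadwiger theorem only controls total masses, as you note). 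Even granting that a suitable characterization of curvature measures exists, invoking it would oblige you to verify its hypotheses for $\Xi_m$: nonnegativity, additivity in $M$, and above all local determination of $\Phi^{(j)}_{m,d+j-m}(K,\cdot\,;A\times\cdot)$ in the pair $(M,B)$---properties which, for the mixed measures of a general local functional, are themselves only accessible through the polytopal representation of \cite{W09} and an approximation argument, i.e.\ through exactly the work your proposal skips. As written, the central identity of \eqref{hadwiger2a} is therefore asserted rather than proved, and the construction and locality of $\Phi^{(j)}_m$ hinge on the same missing theorem.

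The paper resolves this step differently and self-containedly: it evaluates the rotation average for polytopes $P,Q$, where Theorem 5.1 of \cite{W09} gives the explicit face representation of the mixed measure with density $f_j(n(P,\vartheta Q;F,\vartheta G))[F,\vartheta G]$, reduces everything to the spherical integral $I(a,b)=\int_{SO_d}f_j((\breve a+\vartheta\breve b)\cap S^{d-1})[L_1,\vartheta L_2]\,\nu(d\vartheta)$, and observes that $b\mapsto I(a,b)$ is nonnegative (this is precisely where $\Phi\ge 0$, i.e.\ $f_j\ge 0$, enters), simple, additive and rotation invariant on spherical polytopes in $L_2$, hence a multiple of spherical Lebesgue measure by \cite[Theorem 14.4.7]{SW}. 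This yields coefficients $c^{(j)}_{F^\bot}(n(P,F))$ and the explicit formula $\Phi^{(j)}_m(P,\cdot)=\sum_{F}c^{(j)}_{F^\bot}(n(P,F))\lambda_F$, from which additivity, simplicity of the associated function, and the local-extension property of $\varphi^{(j)}_m$ can be read off directly; the general case then follows by the same polytopal approximation as in Theorem \ref{trans3}. To salvage your body-level argument you would need a precise citation (or proof) of a characterization theorem for nonnegative, motion-equivariant, locally determined measure-valued valuations, together with proofs of the local determination and additivity of the rotation-averaged mixed kernels; without these, the spherical route of the paper is the complete argument.
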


\begin{proof} It is sufficient to prove the local version \eqref{hadwiger2a}. Then,  \eqref{2-case} yields
\begin{align*}
\int_{G_d} &\Phi^{(j)}(K \cap gM,A\cap gB) \mu (dg)\\
&=\int_{SO_d}  \int_{{\mathbb R}^d} \Phi^{(j)}(K \cap (\vartheta M)^x,A\cap(\vartheta B)^x)\, \lambda(dx)\nu (d\vartheta )\\ 
&= \Phi^{(j)}(K,A) \lambda_M(B) + \sum_{m=j+1}^{d} \int_{SO_d}\Phi_{m,d+j-m}^{(j)}  
(K,\vartheta M;A\times\vartheta B)\nu (d\vartheta )  .
\end{align*}
Here, $SO_d$ is the rotation group with invariant (probability) measure $\nu$.

For polytopes $P,Q$, \cite[Theorem 5.1]{W09} shows that
\begin{align*} &\Phi_{m,d+j-m}^{(j)}(P,\vartheta Q;A\times\vartheta B)\\ &\quad = \sum_{F \in {\cal F}_{m}(P)} \sum_{G \in {\cal F}_{d+j-m}(Q)} 
f_j(n(P,\vartheta Q;F,\vartheta G)) [F, \vartheta G]\lambda_F(A)\lambda_G(B). 
\end{align*}
Hence, we need to evaluate the integral
\begin{align*}
 &\int_{SO_d} f_j(n(P,\vartheta Q;F,\vartheta G)) [F, \vartheta G]\nu (d\vartheta ) \\
&\quad = \int_{SO_d} f_j((N(P,F) +\vartheta N(Q,G))\cap S^{d-1}) [L_1, \vartheta L_2]\nu (d\vartheta ),
\end{align*}
where $L_1, L_2$ are the subspaces orthogonal to $F$ resp. $G$ (see \cite[pp. 191-193]{SW}, for this and the following arguments). We consider, more generally
$$
I(a,b) = \int_{SO_d} f_j((\breve{a}+\vartheta \breve{b})\cap S^{d-1}) [L_1, \vartheta L_2]\nu (d\vartheta )
$$
for closed spherically convex sets $a\subset S^{d-1}\cap L_1, b\subset S^{d-1}\cap L_2$. Here, $\breve{C}$ denotes the cone generated by a set $C\subset S^{d-1}$. It is immediate that $I(a,\eta b) = I(a,b)$ for each rotation $\eta$ leaving $L_2^\bot$ fixed. Moreover, since we assumed $\Phi \ge 0$, we have $f_j\ge 0$. Hence, for fixed $a$, the functional
$$ I_a : b\mapsto I(a,b)$$
is $\ge 0$ and rotational invariant on $\{ b\in {\wp}_{d-j-1}^{d-1} : b \subset L_2\}$. Since $f_j$ is simple, we get similarly to \cite[p. 192]{SW} that $I_a$ is simple (and additive). Theorem 14.4.7 in \cite{SW} now shows that $I_a(b) = c_{L_1}^{(j)}(a) \sigma^{(L_2)}(b)$, where $\sigma^{(L_2)}$ is the spherical Lebesgue measure in $L_2$ and $c_{L_1}^{(j)}(a)\ge 0$ is a constant depending on $a, L_1$ and $f_j$ (but not on $L_2$). Hence, if we define
$$
\Phi^{(j)}_m(P,\cdot) = \sum_{F \in {\cal F}_{m}(P)}  c_{F^\bot}^{(j)}(n(P,F)) \lambda_F ,
$$
it follows that
\begin{align*}
\int_{G_d}& \Phi^{(j)}(P \cap gQ,A\cap gB) \mu (dg)\\
&=\Phi^{(j)}(P,A) \lambda_Q(B) + \sum_{m=j+1}^{d} \Phi^{(j)}_m(P,A) \sum_{G \in {\cal F}_{d+j-m}(Q)} 
\sigma^{(F^\bot)}(n(Q,G)) \lambda_G(B)\cr
&= \Phi^{(j)}(P,A) \lambda_Q(B) + \sum_{m=j+1}^{d} \Phi^{(j)}_m(P,A)  \Phi_{d+j-m}(Q,B)
\end{align*}
due to the representation of the curvature measure $ \Phi_{d+j-m }(Q,\cdot)$ of polytopes  $Q$ (see \cite[eq. (4.22)]{S}).

From the argument in \cite[p. 192]{SW}, we also get that $a\mapsto I(a,b)$ is simple and additive (for fixed $b$), hence 
$c_{L_1}^{(j)}$ is simple and additive. Thus, if we define $\varphi^{(j)}_m(P)=\Phi^{(j)}_m(P,\R^d)$, then  $\varphi^{(j)}_m$ is a translation invariant valuation on $\cal P$ with local extension $\Phi^{(j)}_m$. 

The extension of \eqref{hadwiger2a} and \eqref{hadwiger2} to arbitrary bodies $K,M\in{\cal K}$ follows now as in the proof of Theorem \ref{trans3}. Finally, the continuity of $\Phi_{m}^{(j)}$ and $ \varphi_{m}^{(j)}$ can be obtained from the continuity of $\Phi^{(j)}$ and $ \varphi^{(j)}$ using the homogeneity properties of the former functionals. Thus, $ \varphi_{m}^{(j)}$ is a local functional on $\cal K$ with local extension $ \Phi_{m}^{(j)}$. 
Note that the additivity of $ \varphi_{m}^{(j)}$ (and $ \Phi_{m}^{(j)}$) would follow now also from Theorem \ref{th1}.
\end{proof}

\section{Applications to Boolean models}

In this section, we consider a Boolean model $Z$ in $\R^d$ with convex grains and assume that the underlying (Poisson) particle process $X$ has a translation regular and locally finite intensity measure $\Theta$ (see \cite[Section 11.1]{SW}, for details). Thus, 
\begin{equation}\label{16.1.1}
\Theta (A) = \int_{{\cal K}_0} \int_{{\mathbb R}^d} {\bf 1}_A(K+x) \eta (K,x)\,\lambda (dx)\, {\mathbb Q} (d K),\qquad A\in{\cal B}({\cal K}),
\end{equation}
where ${\cal K}_0$ denotes the set of convex bodies with circumcenter at the origin, ${\mathbb Q}$ is a probability measure on ${\cal K}_0$ and $\eta\ge 0$ is a measurable function on ${\cal K}_0\times \R^d$. If $\eta$ does not depend on $K$, the {\it spatial intensity function} $\eta$ and the {\it grain distribution} $\Q$ are uniquely determined by \eqref{16.1.1}.  By $X^k_{\not=}$ we denote the process of $k$-tuples $(K_1,\dots ,K_k)$ of pairwise different particles $K\in X$, $k=1,2,\dots$.

In the following, we concentrate on a standard functional $\varphi$ with local extension $\Phi$ and assume $\Phi\ge 0$, for simplicity. Since the mixed functionals $\varphi^{(j)}_{m_1,...,m_k}(K_1,\dots, K_k)$ are continuous in each variable $K_i$, they are {\it locally bounded} (that is, bounded on each set $\{K\in{\cal K} : K\subset cB^d\}, c>0$, where $B^d$ is the unit ball).
Therefore, $\varphi$ and the mixed functionals  $\varphi^{(j)}_{m_1,...,m_k}$ meet the requirements in Sections 9.2 and 11.1 of \cite{SW} and we do not need an extra integrability condition here. We only mention that the local finiteness of $\Theta$, which we generally assume, is equivalent to
\begin{align}\label{integrability2}
&\int_{{\cal K}_0}\int_{{\mathbb R}^d}{\bf 1}\{K^{x}\cap C\not=\emptyset\} \eta (K,x)\,\lambda (dx)\,{\mathbb Q}(dK)<\infty ,\quad\quad\end{align}
for any compact $C\subset\R^d$ (see \cite[(11.4)]{SW}).

 The following result is the analog of Corollary 11.1.4 in \cite{SW} and follows in the same way from Theorem \ref{trans3} above (compare also the proof of Theorem 7.1 in \cite{W09}).

 \begin{theorem}\label{bm1} Let $X$ be a Poisson process of convex particles in ${\mathbb R}^d$ with translation regular and locally finite intensity measure, let $k\in{\mathbb N}$, $j\in\{ 0,\dots ,d\}$ and $m_1,\dots,m_k\in \{j,\dots,d\}$ with 
$$
\sum_{i=1}^k m_i = (k-1)d+j.
$$
Let $\varphi$ be a standard functional on $\cal K$  with local extension $\Phi\ge 0$ and let $\Phi^{(j)}_{m_1,...,m_k}$ be the corresponding mixed kernel (which exists by Theorem 2.3). 

Then, 
$${\mathbb E} \sum_{(K_1,\dots ,K_k)\in X^k_{\not=}} \Phi^{(j)}_{m_1,\dots ,m_k}(K_1,\dots ,K_k;\cdot )$$ 
is a locally finite measure on $({\mathbb R}^d)^k$ which is absolutely continuous with respect to $\lambda^k$, and a density is given by  
\begin{eqnarray*}
&&\overline \varphi_{ m_1,\dots,m_k}^{(j)}(X,\dots ,X;z_1,\dots ,z_k) \\ 
&&= \int_{{\cal K}_0}\dots\int_{{\cal K}_0}\int_{({\mathbb R}^d)^k} \eta(K_1,z_1-x_1)\cdots \eta(K_k,z_k-x_k)\,\\
&& \hspace*{4mm}\times\,\Phi^{(j)}_{m_1,\dots,m_k}(K_{1},\dots,K_{k};d (x_1,\dots ,x_k))\,{\mathbb Q} (d K_1)\cdots \,{\mathbb Q} (d K_k)
\end{eqnarray*}
for $\lambda^k$-almost all $(z_1,\dots,z_k)\in({\mathbb R}^d)^k$.
\end{theorem}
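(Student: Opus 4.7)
The plan is to follow the approach of \cite[Corollary 11.1.4]{SW} (and the polytopal analogue, the proof of Theorem 7.1 in \cite{W09}), with the mixed kernels $\Phi^{(j)}_{m_1,\dots,m_k}$ supplied by Theorem \ref{trans3} playing the role of the mixed measures of curvature measures. The starting point is the Campbell formula for the $k$-th factorial moment measure of the Poisson process $X$, which (since $X$ is Poisson, so this measure equals $\Theta^k$) yields
\begin{align*}
{\mathbb E} \sum_{(K_1,\dots,K_k)\in X^k_{\not=}} & \Phi^{(j)}_{m_1,\dots,m_k}(K_1,\dots,K_k;\,\cdot\,)\\
& = \int_{{\cal K}^k} \Phi^{(j)}_{m_1,\dots,m_k}(K_1,\dots,K_k;\,\cdot\,)\,\Theta^k(d(K_1,\dots,K_k))
\end{align*}
as an identity of measures on $(\R^d)^k$; the non-negativity of $\Phi^{(j)}_{m_1,\dots,m_k}$ (inherited from $\Phi\ge 0$ via the explicit polytopal formula of \cite[Theorem 5.1]{W09} and weak convergence) legitimizes this step without further integrability concerns.

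Next, I would insert the representation \eqref{16.1.1} of $\Theta$ and invoke the translation-covariance identity
$$\Phi^{(j)}_{m_1,\dots,m_k}(K_1+x_1,\dots,K_k+x_k;\, B) = \Phi^{(j)}_{m_1,\dots,m_k}(K_1,\dots,K_k;\, B - (x_1,\dots,x_k))$$
for Borel $B\subset (\R^d)^k$ (where $B - y$ denotes the coordinate-wise translate), which is a bookkeeping consequence of the polynomial expansion \eqref{6.4.5} defining the mixed kernels, combined with the translation covariance of $\Phi^{(j)}$. Specializing to $B = A_1\times\cdots\times A_k$, expanding $\mathbf{1}_{B-x}$ as a product of indicators, substituting $z_i = x_i + y_i$ in each coordinate separately, and applying Fubini then identifies the $\lambda^k$-density of the measure $\mu := {\mathbb E}\sum_{X^k_{\not=}}\Phi^{(j)}_{m_1,\dots,m_k}(K_1,\dots,K_k;\,\cdot\,)$ with the claimed expression.

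Local finiteness of $\mu$ would finally be verified by estimating its density on each compact product $C^k$; the required bound combines the local boundedness of $\varphi^{(j)}_{m_1,\dots,m_k}$ noted just before the theorem with the integrability condition \eqref{integrability2}, exactly as in \cite[Section 11.1]{SW}. I expect the only real point of care to be the translation-covariance identity displayed above, in the precise form required for the change of variables; once it is set up, everything else is a routine Fubini computation.
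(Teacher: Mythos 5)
Your proposal follows exactly the route the paper intends: the paper gives no separate argument but states that the theorem ``follows in the same way from Theorem \ref{trans3}'' as \cite[Corollary 11.1.4]{SW} and the proof of Theorem 7.1 in \cite{W09}, i.e.\ the Campbell formula for the $k$-th factorial moment measure $\Theta^k$ of the Poisson process, insertion of \eqref{16.1.1}, translation covariance of the mixed kernels (referenced to \cite[Section 6.4]{SW}), a Fubini/change-of-variables step, and local finiteness via local boundedness together with \eqref{integrability2}. So your argument is correct and essentially identical to the paper's.
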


Here, for $k=1$, we have $\Phi^{(j)}_j = \Phi^{(j)}$ and correspondingly write $\varphi^{(j)}(X,\cdot)$ for $\varphi^{(j)}_j(X;\cdot)$.

Combining this result with \cite[Theorem 11.1.2]{SW}, we obtain, by a similar proof as in \cite{SW} and analogously to the proof of Theorem 8.1 in \cite{W09}, the following extension of \cite[Theorem 11.1.3]{SW} and of \cite[Theorem 8.1]{W09}. Here, for a standard functional $\varphi$ with local extension $\Phi$, we make use of the fact that $\Phi$ extends to the extended convex ring as a signed Radon measure. Therefore, $\Phi(Z,\cdot )$ is a random signed Radon measure and its expectation ${\mathbb E}\Phi(Z,\cdot )$ is a signed Radon measure.

\begin{theorem}\label{bm2}
Let $Z$ be a Boolean model in ${\mathbb R}^d$ with convex grains and let $\varphi$ be a standard functional with local extension $\Phi\ge 0$. 

Then, for $j=0,...,d$, the signed Radon measure ${\mathbb E}\Phi^{(j)}(Z,\cdot )$ is absolutely continuous with respect to $\lambda$. For $\lambda$-almost all $z$, its density $\overline \varphi^{(j)}(Z,\cdot)$ satisfies
\begin{equation*}
\overline \varphi^{(d)}(Z,z) = c_d\left( 1 -\E^{-\overline V_d(X,z)}\right),
\end{equation*} 
$$
\overline \varphi^{(d-1)}(Z,z) = \E^{-\overline V_d(X,z)} \overline \varphi^{(d-1)}(X,z),
$$
and
\begin{align*}
\overline  \varphi^{(j)}(Z,z)&=\E^{-\overline V_d(X,z)}\left(\overline \varphi^{(j)}(X,z) -\sum_{s=2}^{d-j}\frac{(-1)^{s}}{s!}\right.\\
&\hspace*{4mm}\times\,\sum_{m_1,\dots, m_s=j+1\atop m_1+\dots +m_s=(s-1)d+j}^{d-1} \overline \varphi^{(j)}_{m_1,\dots ,m_s}(X,\dots,X;z,\dots ,z)\Biggr),
\end{align*}
for $j=0,\dots ,d-2.$
\end{theorem}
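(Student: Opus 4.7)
The plan is to follow the strategy used in \cite[Theorem 11.1.3]{SW} and \cite[Theorem 8.1]{W09}, adapting it to the present setting of a standard functional on $\cal K$ with non-negative local extension. The starting point is that, since $\Phi^{(j)}\ge 0$ is additive and continuous on $\cal K$ (by Theorem \ref{th1}), the inclusion-exclusion principle extends it to a Radon measure on the extended convex ring. Fixing a Borel set $A$ inside a compact window $C$, one has
\begin{align*}
\Phi^{(j)}(Z\cap C,A)=\sum_{s\ge 1}\frac{(-1)^{s-1}}{s!}\sum_{(K_1,\dots,K_s)\in X^s_{\neq}}\Phi^{(j)}(K_1\cap\dots\cap K_s,A),
\end{align*}
where the inner sum is almost surely finite by \eqref{integrability2}. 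Taking expectations and applying the Campbell-type formula for the factorial moment measure of $X^s_{\neq}$ from \cite[Theorem 11.1.2]{SW} reduces the computation to an iterated translative integral of $\Phi^{(j)}$ over $s$-tuples of typical grains, which is precisely the object decomposed by Theorem \ref{trans3}.

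In the second step I would substitute the translative expansion from Theorem \ref{trans3} and recognize the integrands, after averaging with the grain distribution ${\mathbb Q}$ and the intensity $\eta$, as the densities $\overline\varphi^{(j)}_{m_1,\dots,m_s}(X,\dots,X;z,\dots,z)$ provided by Theorem \ref{bm1}. This yields the preliminary identity
\begin{align*}
\overline\varphi^{(j)}(Z,z)=\sum_{s\ge 1}\frac{(-1)^{s-1}}{s!}\sum_{m_1+\dots+m_s=(s-1)d+j\atop j\le m_i\le d}\overline\varphi^{(j)}_{m_1,\dots,m_s}(X,\dots,X;z,\dots,z).
\end{align*}
The decisive observation is the decomposability noted after Theorem \ref{trans3}: whenever some $m_i=d$, the mixed measure splits off a factor $\lambda_{K_i}$, so after integration the corresponding density splits off a factor $\overline V_d(X,z)$. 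Collecting the terms in which a prescribed subset of the $s$ indices equals $d$ and resumming the alternating series, the volume contributions combine into the exponential $\E^{-\overline V_d(X,z)}$ exactly as on \cite[pp.\ 388--390]{SW}. What remains inside the bracket is the alternating sum indexed by $s\le d-j$, over configurations with all $m_i\in\{j+1,\dots,d-1\}$ (the constraint $\sum m_i=(s-1)d+j$ forces $s\le d-j$). For $j=d$ only the trivial term survives, giving $\overline\varphi^{(d)}(Z,z)=c_d\,{\mathbb P}(z\in Z)=c_d(1-\E^{-\overline V_d(X,z)})$. For $j=d-1$ the indices $m_i\in\{d-1,d\}$ with $\sum m_i=(s-1)d+d-1$ force exactly one index equal to $d-1$, and only the $s=1$ term survives in the bracket.

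The main obstacle will be the combinatorial rearrangement of the alternating series that separates the exponential factor $\E^{-\overline V_d(X,z)}$ from the remaining mixed contributions. In \cite[Theorem 11.1.3]{SW} this rests on the decomposability of the mixed measures when an index equals $d$ and on $\overline V_d(X,z)=\overline\varphi^{(d)}(X,z)/c_d$; since the mixed measures $\Phi^{(j)}_{m_1,\dots,m_k}$ in the present generality were obtained only by approximation in Theorem \ref{trans3}, one first has to verify that decomposability transfers from polytopes to arbitrary convex bodies via the continuity of the mixed measures. Once this is in place, the non-negativity assumption $\Phi\ge 0$ guarantees absolute convergence of all series, Fubini-type rearrangements are justified, and the bookkeeping parallels the classical curvature-measure case line by line to deliver the stated formula.
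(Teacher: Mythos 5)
Your proposal is correct and follows essentially the same route the paper takes: the paper proves this theorem by combining Theorem \ref{bm1} with \cite[Theorem 11.1.2]{SW} and arguing as in \cite[Theorem 11.1.3]{SW} and \cite[Theorem 8.1]{W09}, i.e.\ inclusion--exclusion over the particles, the Campbell/factorial moment formula, substitution of the translative expansion from Theorem \ref{trans3}, and resummation of the terms with indices equal to $d$ into the exponential via decomposability. Your cautionary point about transferring decomposability from polytopes to general bodies is exactly what the paper settles in the remarks following Theorem \ref{trans3} (by continuity of the mixed measures, with reference to \cite[Section 6.4]{SW}), so no gap remains.
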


The densities for $X$ are defined as in Theorem 6.1.

In comparison to Theorem 8.1 of \cite{W09}, we did not assume $c_d=1$ here and therefore had to distinguish $\overline \varphi^{(d)}(X,z)$ and $\overline V_d(X,z)$, which resulted in the factor $c_d$ in the first equation. This covers also the case $c_d=0$.

If $Z$ is stationary, then we do not have to assume that a local extension $\Phi$ exists. The local finiteness condition \eqref{integrability2} then can be relaxed to
\begin{align}\label{integrability4}
\int_{{\cal K}_0} V_d(K+ B^d)\,{\mathbb Q} (dK)<\infty .\quad\quad\end{align}

\begin{koro}\label{CorBM}
Let $Z$ be a stationary Boolean model in ${\mathbb R}^d$ with convex grains and let $\varphi$ be a standard functional. Then, 
$$
\overline \varphi^{(d)}(Z) = c_d\left(1 -\E^{-\overline V_d(X)}\right),
$$
$$
\overline \varphi^{(d-1)}(Z) = \E^{-\overline V_d(X)} \overline \varphi^{(d-1)}(X),
$$
and
\begin{eqnarray*}
\overline  \varphi^{(j)}(Z)&=&\E^{-\overline V_d(X)}\left(\overline \varphi^{(j)}(X) -\sum_{s=2}^{d-j}\frac{(-1)^{s}}{s!}\right.\\
&&\hspace*{4mm}\times\,\sum_{m_1,\dots, m_s=j+1\atop m_1+\dots +m_s=(s-1)d+j}^{d-1} \overline \varphi^{(j)}_{m_1,\dots ,m_s}(X,\dots,X)\Biggr),
\end{eqnarray*}
for $j=0,\dots ,d-2.$
\end{koro}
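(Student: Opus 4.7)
The strategy is to repeat the proof of Theorem \ref{bm2} while replacing Theorem \ref{trans3} by its purely functional version Theorem \ref{trans4}: the existence of the mixed functionals $\varphi^{(j)}_{m_1,\dots,m_s}$ for a standard functional $\varphi$ does not rely on a local extension, so the latter is not needed once we work only with scalar densities, which is the typical situation under stationarity. The Radon--Nikodym densities produced in Theorem \ref{bm2} collapse to constants, and these constants are exactly what the corollary asserts.

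First I would decompose the intensity measure of $X$ as $\Theta = \gamma\int_{{\cal K}_0}\int_{\R^d}{\bf 1}\{K+x\in\cdot\}\,\lambda(dx)\,{\mathbb Q}(dK)$ with intensity $\gamma$ and grain distribution ${\mathbb Q}$, and define
$$
\overline\varphi^{(j)}_{m_1,\dots,m_s}(X,\dots,X) := \gamma^s \int_{{\cal K}_0}\!\cdots\!\int_{{\cal K}_0} \varphi^{(j)}_{m_1,\dots,m_s}(K_1,\dots,K_s)\,{\mathbb Q}(dK_1)\cdots{\mathbb Q}(dK_s),
$$
using the mixed functionals supplied by Theorem \ref{trans4}. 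Continuity and $m_i$-homogeneity of these mixed functionals, together with a bound of the form $|\varphi^{(j)}_{m_1,\dots,m_s}(K_1,\dots,K_s)|\le c\prod_i V_d(K_i+B^d)$, reduce finiteness of these integrals to the relaxed assumption \eqref{integrability4}. The stationary mean value identity
$$
{\mathbb E}\sum_{(K_1,\dots,K_s)\in X^s_{\not=}}\varphi^{(j)}_{m_1,\dots,m_s}(K_1\cap W_1,\dots,K_s\cap W_s) = \overline\varphi^{(j)}_{m_1,\dots,m_s}(X,\dots,X)\,\prod_{i=1}^s\lambda(W_i)
$$
(for bounded Borel windows $W_i$) then follows by combining Campbell's formula for the factorial moment measure of $X$ with the iterated translative identity \eqref{6.4.0} from Theorem \ref{trans4}.

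Next I would extend $\varphi^{(j)}$ additively to the convex ring and apply the Boolean model inclusion--exclusion identity
$$
\varphi^{(j)}(Z\cap W) = \sum_{s\ge 1}\frac{(-1)^{s-1}}{s!}\sum_{(K_1,\dots,K_s)\in X^s_{\not=}}\varphi^{(j)}(K_1\cap\dots\cap K_s\cap W),
$$
take expectations, and use the previous identity to obtain a density
$$
\overline\varphi^{(j)}(Z) = \lim_{r\to\infty}\frac{{\mathbb E}\varphi^{(j)}(Z\cap rW)}{\lambda(rW)}
$$
written as an alternating sum of the quantities $\overline\varphi^{(j)}_{m_1,\dots,m_s}(X,\dots,X)$. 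The contributions with at least one index $m_i=d$ factor out powers of $\overline V_d(X)$ by the decomposability property stated after Theorem \ref{trans3}, and exactly as in the proof of \cite[Theorem 9.1.4]{SW} the resulting Taylor series collapses into the factor $\E^{-\overline V_d(X)}$, leaving only mixed terms with all $m_i\le d-1$. The cases $j=d$ and $j=d-1$ are degenerate because the homogeneity constraint $\sum m_i = (s-1)d+j$ then admits no nontrivial mixed terms, yielding the first two displayed identities; the constant $c_d$ enters in the $j=d$ formula because $\varphi^{(d)} = c_dV_d$.

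The main obstacle will be the integrability bookkeeping: without a local extension one must establish directly the monotone envelope $|\varphi^{(j)}_{m_1,\dots,m_s}(K_1,\dots,K_s)|\le c\prod_i V_d(K_i+B^d)$ from continuity, translation invariance, and $m_i$-homogeneity alone, for instance by dominating each $\varphi^{(j)}$ by a multiple of $V_d(\,\cdot+B^d)$ on the set $\{K\in{\cal K}:K\subset cB^d\}$ and transferring the bound to the mixed functionals through the approximation argument used in the proof of Theorem \ref{trans4}. Once this estimate is available, the Schneider--Weil stationary Boolean model machinery of \cite[Sections 9.1--9.2]{SW} applies verbatim.
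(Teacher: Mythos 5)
Your overall strategy is the one the paper itself intends for this corollary: the paper gives no separate proof, but obtains the statement by feeding the mixed functionals of Theorem \ref{trans4} (which exist without any local extension) into the stationary density machinery of \cite[Sections 9.1, 9.2]{SW}, with $\overline\varphi^{(j)}(Z)$ defined as the limit ${\mathbb E}\,\varphi^{(j)}(Z\cap rW)/V_d(rW)$ via \cite[Theorem 9.2.1]{SW} and the densities of $X$ and the mixed densities defined by the same $\Q$-integrals you write down. The inclusion--exclusion over $X^s_{\not=}$, the iterated translative formula \eqref{6.4.0}, decomposability to split off factors $\overline V_d(X)$, and the resulting exponential series are exactly the Schneider--Weil argument (for the stationary translative case this is the proof pattern of \cite[Theorem 9.1.2]{SW}, not 9.1.4, which is the isotropic version --- a minor citation slip). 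Your explanation of why the cases $j=d$ and $j=d-1$ degenerate is also correct.

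The step I would not accept as written is the integrability bookkeeping. The envelope $|\varphi^{(j)}_{m_1,\dots,m_s}(K_1,\dots,K_s)|\le c\prod_i V_d(K_i+B^d)$ does not follow ``from continuity, translation invariance, and $m_i$-homogeneity alone.'' What these hypotheses give is boundedness of $\varphi^{(j)}$ on sets $\{K: K\subset cB^d\}$, hence by homogeneity a bound of the type $|\varphi^{(j)}(K)|\le \|\varphi^{(j)}\|\,R(K)^j$ with $R(K)$ the circumradius; transferring this through the coefficient-extraction argument of Theorem \ref{trans4} produces bounds of the shape $C_{K_1}\prod_{i\ge 2}\lambda(K_1-K_i)$, which are not of product form and involve higher ``moments'' of a single grain (e.g.\ terms like $R(K_1)^j\,V_m(K_1)$), while \eqref{integrability4} only controls first moments of the intrinsic volumes; so your proposed domination is not available at this level of generality, and claiming it as a consequence of continuity and homogeneity is a genuine gap. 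The paper sidesteps this: it never uses a pointwise product bound, but invokes precisely the local (conditional) boundedness of $\varphi^{(j)}$ and of the mixed functionals --- which continuity does give --- as the requirement for the framework of \cite[Sections 9.2 and 11.1]{SW}, where all intersections are confined to the bounded observation window and \eqref{integrability4} is used only to control the (factorial moments of the) number of particles hitting that window; the mixed densities then appear through the limit $r\to\infty$ rather than through an a priori integrable majorant of $\varphi^{(j)}_{m_1,\dots,m_s}$. Relatedly, your intermediate identity with $\varphi^{(j)}_{m_1,\dots,m_s}(K_1\cap W_1,\dots,K_s\cap W_s)$ is not exact for finite windows (boundary terms); it should be formulated via the translative formula inside a window $rW$ and the limit $r\to\infty$, as in \cite{SW}. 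With the integrability argument replaced by this windowed/local-boundedness reasoning (or with the product bound actually proved for the specific functional at hand, as is possible for intrinsic volumes or functionals represented over area or flag measures), your proof goes through and coincides with the paper's.
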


Here, $\overline  \varphi^{(j)}(Z)$ is the density of the $j$-homogeneous part of $\varphi$, additively extended to the convex ring and defined by
$$
\overline{\varphi}^{(j)}(Z) = \lim_{r \to \infty} \frac
{{\mathbb E}\, \varphi^{(j)}(Z \cap rW)}{V_{d}(rW)}
$$
for any $W\in{\cal K}$ with $V_d(W)>0$ (see \cite[Theorem 9.2.1]{SW}). Note that this definition does not require the existence of a local extension $\Phi^{(j)}$ of $\varphi^{(j)}$. But if $\Phi^{(j)}$ exists, the definition of $\overline{\varphi}^{(j)}(Z)$ as a limit yields the same value as the definition in Theorem 3.2 as a (constant) density with respect to $\lambda$. The corresponding density for $X$ can also be defined as a limit or directly as
$$
\overline{\varphi}^{(j)}(X) = \gamma \int_{{\cal K}_0} \varphi^{(j)}(K)\,{\mathbb Q} (d K)
$$
(here, $\gamma$ is the intensity of the (stationary) process $X$) and the mixed densities are
$$
\overline \varphi_{ m_1,\dots,m_k}^{(j)}(X,\dots ,X)  
= \gamma^k\int_{{\cal K}_0}\dots\int_{{\cal K}_0}\varphi^{(j)}_{m_1,\dots,m_k}(K_{1},\dots,K_{k})\,{\mathbb Q} (d K_1)\cdots \,{\mathbb Q} (d K_k).
$$

Theorem \ref{bm2}  generalizes Theorems 9.1.2 and 11.1.2 in \cite{SW}, since it presents a decomposition of the associated kernels considered in \cite{SW}, in analogy to the intrinsic volumes. Corollary \ref{CorBM} even pushes this further since it allows general standard functionals without making use of a local extension. 

We shortly comment on the situation where the Boolean model $Z$ is stationary and isotropic. Then $\mathbb Q$ is rotation invariant, which yields
\begin{align*}
&\overline \varphi_{ m_1,\dots,m_k}^{(j)}(X,\dots ,X)  \\
&\ = \gamma^k\int_{{\cal K}_0}\dots\int_{{\cal K}_0}\int_{SO_d}\varphi^{(j)}_{m_1,\dots,m_k}(\vartheta K_{1},K_2,\dots,K_{k})\,\nu( d\vartheta)\,{\mathbb Q} (d K_1)\cdots \,{\mathbb Q} (d K_k)
\end{align*}
by Fubini's theorem.
The mapping 
$$
K\mapsto \int_{SO_d}\varphi^{(j)}_{m_1,\dots,m_k}(\vartheta K,K_2,\dots,K_{k})\,\nu( d\vartheta) ,
$$
for $K\in\cal K$, 
is invariant under proper rigid motions, continuous, additive and homogeneous of degree $m_1$. Hence, by Hadwiger's characterization theorem (\cite[Theorem 6.4.14]{S}),
$$
\int_{SO_d}\varphi^{(j)}_{m_1,\dots,m_k}(\vartheta K,K_2,\dots,K_{k})\,\nu( d\vartheta) = c^{(j)}_{m_2,\dots,m_k}(K_2,\dots,K_{k}) V_{m_1}(K)
$$
for $K\in\cal K$ with a constant $c^{(j)}_{m_2,\dots,m_k}(K_2,\dots,K_{k})$ which is translation invariant and continuous in $K_2,\dots ,K_k$ and depends additively and homogeneously (of degree $m_i$) on $K_i$. Using the isotropy of $X$ again, we can thus repeat this construction and split the mixed density further. After $k$ steps, we end up with
\begin{align*}
\overline \varphi_{ m_1,\dots,m_k}^{(j)}(X,\dots ,X) 
&= c(\varphi^{(j)}_{m_1,\dots,m_k})\overline V_{m_1}(X)\cdots \overline V_{m_k}(X)
\end{align*}
with a constant $c(\varphi^{(j)}_{m_1,\dots,m_k})$ which depends only on $\varphi^{(j)}$ and the parameters $m_1,\dots ,m_k$. As a consequence, Corollary \ref{CorBM}, for a stationary and isotropic Boolean model $Z$, reduces to the result for the intrinsic volumes $V_j$ (see \cite[Theorem 9.1.4]{SW} but with constants depending on $\varphi^{(j)}$. This corresponds to Theorem 9.1.3 in \cite{SW}. 

We also remark that the translative formulas for valuations $\varphi$ on ${\cal K}$, as well as for local extensions $\Phi$, generalize to the {\it convex ring} ${\cal R}$ immediately, using the fact that continuous additive functionals on convex bodies have an additive extension to {\it polyconvex sets} (finite unions of convex bodies) (compare the remarks in \cite[Section 10]{W09} and \cite[p. 190]{SW}). This allows to generalize also the formulas from Section 6 to Poisson processes on ${\cal R}$ and to Boolean models with polyconvex grains, provided the integrability conditions \eqref{integrability2} and \eqref{integrability4} are modified appropriately. Since  the grain distribution $\mathbb Q$ is now concentrated on the set ${\cal R}_0$ of polyconvex sets with center of the circumsphere at the origin, \eqref{integrability2} has to be replaced by
\begin{align*}
&\int_{{\cal R}_0}\int_{{\mathbb R}^d} 2^{N(K)}{\bf 1}\{K^{x}\cap C\not=\emptyset\} \eta (K,x)\,\lambda (dx)\,{\mathbb Q} (dK)<\infty ,\quad\quad\end{align*}
for any compact $C\subset\R^d$, where $N(K)$ is the minimal number of convex bodies $K_i$ with $K=\bigcup_{i=1}^{N(K)}K_i$ (see \cite[(9.17)]{SW}). Again, a condition on the mixed functionals, as it was required in \cite[(10.1)]{W09}, is not necessary here (notice that in \cite[(10.1)]{W09} the set ${\cal P}_0$ in the range of integration has to be corrected to $({\cal U}_{GP}({\cal P}))_0$).  In the stationary case, the condition now reads
\begin{align*}
\int_{{\cal R}_0} 2^{N(K)} V_d(K+ B^d)\,{\mathbb Q} (dK)<\infty . \end{align*}

\section{Some special cases}

We now discuss several special cases of valuations $\varphi$ (with local extensions) and show which translative integral formulas and corresponding expectation formulas for Boolean models are induced. Thus, we will recover some formulas from the literature but we shall also obtain new results. We concentrate on convex bodies and Boolean models with convex grains, the extension to polyconvex sets is simple, following the remark made at the end of the previous section.

As a first case, we consider the {\it mixed volume} $\varphi (K) = V(K[j],M_{j+1},\dots,M_d)$, for fixed bodies $M_{j+1},\dots,M_d\in\cK$. It follows from the properties of the intrinsic volume $V_j$ (which corresponds to the case $M_{j+1}= \dots =M_d=B^d$) that $\varphi$ is a standard functional. The iterated translative integral formula from Theorem \ref{trans4} then reads
\begin{align}
&  \int_{({\mathbb R}^d)^{k-1}}
V(K_1\cap K_2^{x_2}\cap \dots \cap K_k^{x_k}[j], M_{j+1},\dots,M_d)\, \lambda^{k-1} (d(x_2,\dots ,x_k))\nonumber \\
& = \sum_{{m_1,\dots,m_k=j}\atop{m_1+\dots +m_k=(k-1)d+j}}^d
V^{(j)}_{m_1,\dots,m_k}(K_1,\dots,K_k;M_{j+1},\dots,M_d),\label{mixedvol}
\end{align}
with mixed functionals $V^{(j)}_{m_1,\dots,m_k}(K_1,\dots,K_k;M_{j+1},\dots,M_d)$, and the corresponding mean value formulas for stationary Boolean models $Z$ are of the form
\begin{align*}
\overline V(Z[j],M_{j+1},\dots,&M_d) =\E^{-\overline V_d(X)}\left(\overline V(X[j],M_{j+1},\dots,M_d) -\sum_{s=2}^{d-j}\frac{(-1)^{s}}{s!}\right.\\
&\times\,\sum_{m_1,\dots, m_s=j+1\atop m_1+\dots +m_s=(s-1)d+j}^{d-1} \overline V^{(j)}_{m_1,\dots,m_s}(X,\dots,X;M_{j+1},\dots,M_d) \Biggr).
\end{align*}
This holds for $j=0,\dots ,d-1$ and we remark that the volume $V_d$ is the special case $j=d$ of the mixed volume. Moreover, for $j=d-1$ the double sum in the above formula disappears. For $M_{j+1}=\dots =M_d=M$, these formulas are given in \cite{W01} (see also \cite[Corollary 11.1.2]{SW}).

In general, the question whether $K\mapsto V(K[j],M_{j+1},\dots,M_d)$ has a local extension seems to be open. In the special case, where $M_d=B^d$ and $M_{j+1},\dots ,M_{d-1}$ are strictly convex, a local extension is given (up to a constant) by 
\begin{align*}
\Phi(K[j],M_{j+1},&\dots,M_{d-1},B^d,\cdot)\\
&=C^{(d-1)}_{j,1,\dots,1}(K,M_{j+1},\dots,M_{d-1};\cdot\times M_{j+1}\times\cdots\times M_{d-1}) ,
\end{align*} 
where $C^{(d-1)}_{j,1,\dots,1}(K,M_{j+1},\dots,M_{d-1};\cdot)$ is the mixed curvature measure 
introduced and studied in \cite{KW99} (see also \cite{Hug} and \cite{HL}, for the related notion of relative curvature measures). This implies a corresponding local integral formula coming from Theorem \ref{trans3} and a mean value formula for Boolean models (without a stationarity assumption) as the outcome of Theorem \ref{bm2}. We do not copy these results here.

As a next case, we consider the {\it (centered) support function} $\varphi (K) = h^\ast (K,\cdot )$. This is a standard functional which is homogeneous of degree 1 with values in the Banach space of centered continuous functions on $S^{d-1}$. To fit this case into our framework, we may apply the results for standard functionals point-wise, that is, for $h^\ast (K,u), u\in S^{d-1}$. The iterated translative formula then reads 
\begin{align}
&  \int_{({\mathbb R}^d)^{k-1}}
h^\ast (K_1\cap K_2^{x_2}\cap \dots \cap K_k^{x_k},\cdot)\, \lambda^{k-1} (d(x_2,\dots ,x_k))\nonumber \\
& = \sum_{{m_1,\dots,m_k=1}\atop{m_1+\dots +m_k=(k-1)d+1}}^d
h^\ast_{m_1,\dots,m_k}(K_1,\dots,K_k,\cdot),\label{suppf}
\end{align}
with mixed support functions $h^\ast_{m_1,\dots,m_k}(K_1,\dots,K_k,\cdot)$. This integral formula was studied in \cite{W95} and \cite{GW03} where it was also shown that the mixed function $h^\ast_{m_1,\dots,m_k}(K_1,\dots,K_k,\cdot)$ is indeed a support function in the case $k=2$ (a proof for general $k$ was given in \cite{S?}). 
The corresponding mean value formula for stationary Boolean models $Z$ is of the form
\begin{align}
\overline {h^\ast}(Z,\cdot) &=\E^{-\overline V_d(X)}\left(\overline {h^\ast}(X,\cdot) -\sum_{s=2}^{d-1}\frac{(-1)^{s}}{s!}\right.\nonumber\\
&\hspace*{4mm}\times\,\sum_{m_1,\dots, m_s=2\atop m_1+\dots +m_s=(s-1)d+1}^{d-1} \overline {h^\ast}_{m_1,\dots,m_s}(X,\dots,X,\cdot ) \Biggr).\label{suppfbm}
\end{align}
The specific (centered) support function $\overline {h^\ast}(Z,\cdot)$ on the left side was introduced and studied in \cite{W94}.

Again, there is a local extension of $K\mapsto h^\ast(K,u)$ given by the mixed measure $\phi_{1,d-1}^{(0)}(K,u^+;\cdot\times \beta(u))$ where $u^+$ is the closed half-space with outer normal $u$ and $\beta (u)$ is a Borel set in the hyperplane $u^\perp$ (bounding $u^+$) with measure $\lambda_{u^\perp}(\beta(u))=1$ (see \cite{W95, GW03}). The corresponding iterated translation formula for this mixed measure is a consequence of Theorem \ref{trans3}, but it also follows from the general results in \cite[Section 6.4]{SW}. The formula which arises from Theorem \ref{bm2} and which is the version of \eqref{suppfbm} in the non-stationary case is not in the literature yet, but since it is quite similar to \eqref{suppfbm}, we skip it here.

Next, we consider the {\it area measure} map $\Psi_j : K\mapsto \Psi_j(K,\cdot)$. It is a translation invariant additive and measure-valued functional which is continuous with respect to the weak topology of measures. It  has a local extension given by the support measure map $\Lambda_j : K\mapsto \Lambda_j(K,\cdot)$, the latter being a measure on $\R^d\times S^{d-1}$ which is concentrated on the (generalized) normal bundle $\Nor K$ of $K$. To fit these measure-valued notions into our results, we cannot consider them point-wise, for a given Borel set, since this would not yield a continuous valuation. However, we can apply our results to the integral 
$$\varphi_f (K) = \int_{S^{d-1}} f(u) \Psi_j(K,du)$$
with a centered continuous  function $f$ on $S^{d-1}$ (and similarly for the support measure). $\varphi_f (K)$ is then a standard functional in $K$, but also a continuous linear functional in $f$, for each fixed $K$. Since these properties carry over to the mixed functionals, we can use the Riesz theorem to obtain formulas for mixed area measures from the results in Sections 4 and 6. The resulting translative formulas were originally obtained in \cite{Hug}  and the mean value formulas for Boolean models are given in \cite{Hoe}. We abstain from copying these results here. 

Instead, we use the functional analytic approach just described in a similar situation, for {\it flag measures} of convex bodies, where corresponding formulas are not available yet. We first describe the underlying notions concerning flag manifolds. Recall that $G(d,j)$ denotes the Grassmannian of $j$-dimensional subspaces (which we supply with the invariant probability measure $\nu_j$) and define corresponding flag manifolds by
$$
F(d,j) = \{(u,L) : L\in G(d,j), u\in L\cap S^{d-1}\}
$$
and
$$
F^\perp (d,j) = \{ (u,L) : L\in G(d,j), u\in L^\perp\cap S^{d-1}\} .
$$
Both flag manifolds carry natural topologies (and invariant Borel probability measures) and $F(d,d-j)$ and $F^\perp(d,j)$ are homeomorphic via the orthogonality map $\rho : (u,L)\mapsto (u, L^\bot)$.  We define a flag measure $\psi_j (K,\cdot)$ as a projection mean of area measures,
\begin{equation}\label{flagprojformula}
\psi_j (K,A) = \int_{G(d,j+1)}\int_{S^{d-1}\cap L} {\bf 1}\{ (u,L^\bot\vee u)\in A\} \Psi'_j(K|L,du) \nu_{j+1}(dL) 
\end{equation}
for a Borel set $A\subset F(d,d-j)$, where $L^\bot\vee u$ is the subspace generated by  $L^\bot$ and the unit vector $u$ and where the prime indicates the area measure calculated in the subspace $L$ (for the necessary measurability properties needed here and in the following, we refer to \cite{Hind}). Using the homeomorphism $\rho$, we can replace $\psi_j (K,\cdot)$ by a measure $\psi_j^\bot (K,\cdot)$ on $F^\perp(d,j)$ given by
\begin{equation}\label{flagprojformula2}
\psi_j^\bot (K,A) = \int_{G(d,j+1)}\int_{S^{d-1}\cap L} {\bf 1}\{ (u,L\cap u^\bot)\in A\} \Psi'_j(K|L,du) \nu_{j+1}(dL) .
\end{equation}
These two (equivalent) versions of the same flag measure are motivated by the fact that their images under the map $(u,L)\mapsto u$ are in both cases the $j$th order area measure  $\Psi_j(K,\cdot)$. Both measures, $\psi_j (K,\cdot)$ and $\psi_j^\bot (K,\cdot)$ have a local version $\lambda_j(K,\cdot)$, respectively $\lambda_j^\bot (K,\cdot)$, which is obtained by replacing in \eqref{flagprojformula} and \eqref{flagprojformula2} the area measure $\Psi'_j(K|L,\cdot)$ by the support measure $\Lambda'_j(K|L,\cdot)$ (see \cite[Theorem 4]{HTW}). In the following, we concentrate on $\psi_j (K,\cdot)$, formulas for the other representation $\psi_j^\bot (K,\cdot)$ follow in a similar way. 

The measure $\psi_j (K,\cdot)$ is centered in the first component,
$$
\int_{F(d,d-j)} u \psi_j(K,d(u,L)) = 0,
$$
as follows from the corresponding property of area measures. Let $C_0(F(d,d-j))$ be the Banach space of continuous functions on $F(d,d-j)$, which are centered in the first component, and choose $f\in C_0(F(d,d-j))$. Then, 
$$\varphi_f : K\mapsto \int_{F(d,d-j)} f(u,L) \psi_j(K,d(u,L))$$
is a local standard functional on $\cK$. Consequently, we obtain the iterated translation formula
\begin{align}
  \int_{({\mathbb R}^d)^{k-1}}
\varphi_f(K_1\cap &K_2^{x_2}\cap \dots \cap K_k^{x_k})\, \lambda^{k-1} (d(x_2,\dots ,x_k))\nonumber \\
& = \sum_{{m_1,\dots,m_k=j}\atop{m_1+\dots +m_k=(k-1)d+j}}^d
\varphi^{(j)}_{f,m_1,\dots,m_k}(K_1,\dots,K_k),\label{flagmeas}
\end{align}
with mixed functionals $\varphi^{(j)}_{f,m_1,\dots,m_k}(K_1,\dots,K_k)$. For fixed bodies $K_1,\dots , K_k$, the left side is a continuous linear functional on $C_0(F(d,d-j))$, if we let $f$ vary. Namely, $f\mapsto \varphi_f(K_1\cap K_2^{x_2}\cap \dots \cap K_k^{x_k})$ is continuous and linear, for each $x_1,\dots ,x_k$, and this carries over to the integral. Replacing $K_1,\dots ,K_k$ by $\alpha_1K_1,\dots ,\alpha_kK_k, \alpha_i>0,$ we use the homogeneity properties of $\varphi^{(j)}_{f,m_1,\dots,m_k}$ to see that the right side is a polynomial in $\alpha_1,\dots ,\alpha_k$. This shows that the coefficients $\varphi^{(j)}_{f,m_1,\dots,m_k}(K_1,\dots,K_k)$ of this polynomial must be continuous linear functionals on $C_0(F(d,d-j))$, too. By the Riesz representation theorem we obtain finite (signed) measures $\varphi^{(j)}_{m_1,\dots,m_k}(K_1,\dots,K_k;\cdot)$ on $F(d,d-j)$ such that
$$
\varphi^{(j)}_{f,m_1,\dots,m_k}(K_1,\dots,K_k) = \int_{F(d,d-j))} f(u,L) \psi^{(j)}_{m_1,\dots,m_k}(K_1,\dots,K_k;d(u,L))
$$
for all $f\in C_0(F(d,d-j))$. The measures are uniquely determined, if we require that they are centered. We call them the {\it mixed flag measures}. Hence we obtain the iterated translation formula for flag measures,
\begin{align}
  \int_{({\mathbb R}^d)^{k-1}}
\psi_j(K_1\cap &K_2^{x_2}\cap \dots \cap K_k^{x_k},\cdot)\, \lambda^{k-1} (d(x_2,\dots ,x_k))\nonumber \\
& = \sum_{{m_1,\dots,m_k=j}\atop{m_1+\dots +m_k=(k-1)d+j}}^d
\psi^{(j)}_{m_1,\dots,m_k}(K_1,\dots,K_k;\cdot).\label{flagmeas2}
\end{align}

Theorem \ref{bm2} then gives us formulas for the specific flag measures $\overline  \psi_j(Z,z;\cdot)$, $j=0,\dots ,d-1$, as measure-valued functions of $z\in\R^d$,
\begin{align}
\overline  \psi_j(Z,z;\cdot)&=\E^{-\overline V_d(X,z)}\left(\overline \psi_j(X,z;\cdot) -\sum_{s=2}^{d-j}\frac{(-1)^{s}}{s!}\right.\nonumber\\
&\hspace*{4mm}\times\,\sum_{m_1,\dots, m_s=j+1\atop m_1+\dots +m_s=(s-1)d+j}^{d-1} \overline \psi^{(j)}_{m_1,\dots ,m_s}(X,\dots,X;z,\dots ,z;\cdot)\Biggr).\label{flagbm}
\end{align}
Notice that both sides are (centered) measures on $F(d,d-j)$ and that the formulas for specific area measures result if we apply the mapping $(u,L)\mapsto u$. For $j=d-1$ the double sum disappears. If $Z$ is stationary, the quantities in \eqref{flagbm} are independent of $z$.

It would be possible to apply our general results also to {\it tensor valuations}, coordinate-wise. However, the condition of translation invariance makes the results less interesting since for tensor valuations a notion of {\it translation covariance} is more natural (see \cite[p. 363]{S}, \cite[p. 198-9]{SW} and \cite{HS15}.) For the special class of Minkowski tensors the above-mentioned formulas for support measures can be used instead. The resulting translation formulas for Minkowski tensors and the tensorial formulas for Boolean models are collected in \cite{Hetal} and in the forthcoming survey \cite{HW15}.

\section{Appendix}
We formulate and prove here a lemma on the simultaneous approximation of convex bodies by polytopes which shows that a continuous functional $\varphi : \cK\to\R$ is additive provided it is additive on $\cP$. The following proof, which replaces a more complicated argument by D. Hug and W. Weil, is due to R. Schneider.

\begin{lemma}\label{Rolf} Let $K_1, \dots ,K_m \in \cK$ be convex bodies such that $K = K_1 \cup\dots\cup K_m$ is convex. Let $\varepsilon > 0$. Then there are polytopes $P_1,\dots, P_m \in \cP$ with $K_i \subset P_i \subset K_i + \varepsilon B^d$ for $i = 1, \dots ,m$
such that $P = P_1\cup\dots\cup P_m$ is convex.
\end{lemma}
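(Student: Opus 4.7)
The plan is to build the $P_i$ as intersections of two layers of outer polytopal approximations. The first layer pins down a convex polytope that will equal the union, and the second layer ensures each $K_i$ is covered by a polytope that, restricted to the first layer, stays inside $K_i + \varepsilon B^d$. Fix a parameter $\delta \in (0, \varepsilon/2)$.

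First, invoke the standard outer polytopal approximation of a convex body to pick a polytope $Q$ with $K \subset Q \subset K + \delta B^d$. Next, for each $i$, apply the same approximation result to the (non-polytopal) convex body $K_i + \delta B^d$, obtaining a polytope $Q_i$ with
\[
K_i + \delta B^d \;\subset\; Q_i \;\subset\; K_i + 2\delta B^d.
\]
Then set $P_i := Q_i \cap Q$, which is a polytope since it is the intersection of two polytopes.

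The inclusions are immediate: $K_i \subset Q_i$ (since $K_i \subset K_i + \delta B^d$) and $K_i \subset K \subset Q$, so $K_i \subset P_i$; conversely, $P_i \subset Q_i \subset K_i + 2\delta B^d \subset K_i + \varepsilon B^d$ by the choice of $\delta$. It remains to show that $P := P_1 \cup \dots \cup P_m$ is convex; in fact I claim $P = Q$. The inclusion $P \subset Q$ is clear. For the reverse inclusion, let $x \in Q$. Since $Q \subset K + \delta B^d$, there exists $y \in K$ with $\|x - y\| \le \delta$. Using $K = K_1 \cup \dots \cup K_m$, pick $j$ with $y \in K_j$; then $x \in K_j + \delta B^d \subset Q_j$, and hence $x \in Q \cap Q_j = P_j \subset P$.

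The only real obstacle is spotting the two-layer construction: a single polytopal approximation of each $K_i$ would typically fail to cover the vertices of $Q$ that lie outside $K$, so $\bigcup P_i$ could strictly miss part of $Q$ and need not be convex. Fattening by $\delta B^d$ before approximating externally guarantees that the $Q_i$ jointly cover the $\delta$-neighborhood of $K$, hence all of $Q$, while intersecting with $Q$ trims each $Q_i$ back to a polytope still contained in $K_i + \varepsilon B^d$ and forces the union to coincide with the convex polytope $Q$.
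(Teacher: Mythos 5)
Your proof is correct, but it follows a genuinely different route from the paper's. You work entirely with outer approximations: you fatten each $K_i$ to $K_i+\delta B^d$ \emph{before} approximating, pick polytopes $Q_i\supset K_i+\delta B^d$ and $Q\supset K$ with small outer error, and set $P_i=Q_i\cap Q$; the fattening guarantees that the $Q_i$ jointly cover $K+\delta B^d\supset Q$, so the union is exactly the polytope $Q$ and convexity is immediate. The paper (Schneider's argument) instead starts from an \emph{inner} polytopal approximation $Q\subset K\subset Q+\varepsilon B^d$, intersects it with outer approximations $R_i$ of the $K_i$ to get $Q_i=Q\cap R_i$ with $\bigcup_i Q_i=Q$, verifies $\delta(K_i,Q_i)\le\varepsilon$ via the nearest-point map, and only then restores the inclusions $K_i\subset P_i$ by Minkowski addition of a polytope $C$ with $\varepsilon B^d\subset C\subset 2\varepsilon B^d$, using that Minkowski addition distributes over the union so that $P=Q+C$ is convex. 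Your version buys a shorter argument: no nearest-point-map estimate, no Minkowski-sum step, and the constant comes out as $\varepsilon$ directly (via $2\delta<\varepsilon$) rather than the $3\varepsilon$ in the paper, which would have to be rescaled. Both rest on the same standard fact that any convex body admits polytopal approximations within any prescribed Hausdorff distance, applied in your case also to the non-polytopal bodies $K_i+\delta B^d$, which is legitimate.
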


\begin{proof} We choose a polytope $Q$ with $Q \subset K \subset Q+\varepsilon B^d$ and to each $i \in \{1, \dots,m\}$  a polytope $R_i$ with $K_i \subset R_i \subset K_i + \varepsilon B^d$. Then we define $Q_i = Q \cap R_i$ for $i = 1, \dots , m$. We have
$$
Q = Q \cap K = \bigcup_{i=1}^m (Q \cap K_i) \subset \bigcup_{i=1}^m (Q \cap R_i) = \bigcup_{i=1}^m Q_i \subset Q,
$$
hence
$$
Q = \bigcup_{i=1}^m Q_i .
$$

We claim that $\delta (K_i,Q_i) \le  \varepsilon.$ 
For the proof, let $x \in K_i$, and let $y = p(Q_i, x)$ (where $p$ denotes the nearest-point map).
If $x \in Q$, then $x \in Q_i$, hence $x = y$. If $x \notin Q$, then $y = p(Q \cap R_i, x) = p(Q, x)$. Since
$x \in K \subset Q + \varepsilon B^d$, we have $\| x - y\| \le  \varepsilon$. Conversely, let $x \in Q_i$, and let $y = p(K_i, x)$. From $x \in R_i \subset K_i + \varepsilon B^d$ it follows that $\| x - y\|  \le  \varepsilon$. This proves the claim.

Since $\delta (K_i,Q_i) \le  \varepsilon$, we have $K_i \subset Q_i + \varepsilon B^d$. We choose a polytope $C$ with $\varepsilon B^d \subset C \subset
2\varepsilon B^d$ and define $P_i = Q_i + C$. Then $K_i \subset P_i \subset K_i + 3\varepsilon B^d$. Moreover, by \cite[(3.1)]{S},
$$
P = P_1 \cup \dots \cup P_m = (Q_1 + C) \cup \dots \cup (Q_m + C) = (Q_1 \cup \dots \cup Q_m) + C = Q + C,
$$
thus $P$ is convex.
\end{proof}

As we mentioned, the lemma implies that a continuous map $\varphi : \cK\to\R$ which is additive on $\cP$ is additive on $\cK$, hence a valuation. In fact, we obtain a stronger result since we need only require that $\varphi$ is weakly additive on $\cP$ and we get that $\varphi$ is fully additive on $\cK$. The latter means that
\begin{equation}\label{fulladd}
\varphi(K)=\sum_{\emptyset\not= I\subset \{1,\dots ,m\}} (-1)^{|I|-1}\varphi (K_I),
\end{equation}
for $K\in\cK, K=\bigcup_{i=1}^m K_i, K_i\in\cK$, where $K_I= \bigcap_{i\in I} K_i$ and $|I|$ is the cardinality of $I$. To obtain this, we first remark that a weakly additive functional on $\cP$ is fully additive on $\cP$ by \cite[Theorem 6.2.3]{S}. We now apply the lemma with $K_i$ replaced by $K_i+(1/2^r)B^d, r\in{\mathbb N},$ and $\varepsilon = 1/2^{r}$ (note that $\bigcup_{i=1}^m (K_i+(1/2^r)B^d) = K+(1/2^r)B^d$). Thus, we obtain polytopes $P^{(r)}_1,\dots ,P^{(r)}_m$ with convex union $P^{(r)}=\bigcup_{i=1}^m P^{(r)}_i$ and such that $K_i+(1/2^r)B^d\subset P_i^{(r)}\subset K_i+(1/2^{r-1})B^d$. Since $P_i^{(r)}$ is a decreasing sequence of polytopes, as $r\to\infty,$ the polytopes $P_I^{(r)},\emptyset\not= I\subset \{1,\dots ,m\},$ also constitute a decreasing sequence with $\bigcap_{r=1}^\infty P_I^{(r)}= K_I$. Thus, Lemma 1.8.2 in \cite{S} implies $P_I^{(r)}\to K_I$ (and similarly $P^{(r)}\to K$) and \eqref{fulladd} follows from the full additivity of $\varphi$ on ${\cal P}$.

Since the full additivity of a functional $\varphi$ on $\cK$ is equivalent to its additive extendability to polyconvex sets (see \cite[Theorem 6.2.1]{S}), we also obtain now that a local  functional $\varphi$ on $\cK$ has an additive extension to the convex ring $U(\cK)$ without using the (more complicated) extension theorem of Groemer (\cite[Theorem 6.2.5]{S}).

\section*{Acknowledgements}

The research of the author has been supported by the DFG project WE 1613/2-2.

\medskip\noindent
The author is grateful to a referee for useful remarks which helped to improve the presentation of the paper.

\end{document}